\newcommand{\vp}{\varepsilon}
\theoremstyle{plain}
\newtheorem{thm}{Theorem}
\newtheorem{lem}{Lemma}
\newtheorem{cor}{Corollary}
\theoremstyle{definition}
\newtheorem{defn}{Definition}
\theoremstyle{remark}
\begin{document}

\title{Equilibrium measures on saddle sets of holomorphic maps on $\mathbb P^2$}

\author{John Erik Fornaess and Eugen Mihailescu}
\date{}

\maketitle

\begin{abstract}

 We consider the case of hyperbolic basic sets $\Lambda$ of saddle type for holomorphic maps $f: \mathbb P^2\mathbb C \to \mathbb P^2\mathbb C$.
We study equilibrium measures $\mu_\phi$ associated to a class of H\"older potentials $\phi$ on 
$\Lambda$, and find the measures $\mu_\phi$ of iterates of arbitrary Bowen balls. Estimates for the pointwise dimension $\delta_{\mu_\phi}$ of $\mu_\phi$ that involve Lyapunov exponents and a correction term are found, and also a formula for the Hausdorff dimension of $\mu_\phi$  in the case when the preimage counting function is constant on $\Lambda$. \
 For terminal/minimal saddle sets we prove that an invariant measure $\nu$ obtained as a wedge product of two positive closed currents, is in fact the measure of maximal entropy for the \textit{restriction} $f|_\Lambda$. This allows then to obtain formulas for the measure $\nu$ of arbitrary balls, and to give a formula for the pointwise dimension and the Hausdorff dimension of $\nu$.  
\end{abstract}

\textbf{Mathematics Subject Classification 2000:} 37D35, 37F15
37D45, 37F10.

\textbf{Keywords:} Holomorphic maps on $\mathbb P^2$, equilibrium measures of H\"older potentials, hyperbolic basic sets,  pointwise dimensions of measures.

\section{Introduction.}

The dynamics of holomorphic endomorphisms in higher dimensions presents many interesting geometric and ergodic aspects based on the interplay  of complex dynamics, hyperbolic smooth dynamics and ergodic theory  (see \cite{FS-survey}, \cite{Fo}, \cite{FS}, etc.)  
In this paper we study the problem of holomorphic endomorphisms of $\mathbb P^2 \mathbb C$ which are hyperbolic on basic sets of saddle type $\Lambda$ (see \cite{FS} and \cite{Ru-carte89} for hyperbolicity in the non-invertible case). 
An arbitrary holomorphic map $f: \mathbb P^2 \to \mathbb P^2$ is given by three homogeneous polynomials $[P_0: P_1: P_2]$ each of them having the same degree $d$. We will say that $d$ is the \textit{degree} (or the \textit{algebraic degree}) of $f$. 
For such maps $f: \mathbb P^2 \to \mathbb P^2$ Fornaess and Sibony have defined a positive closed current $T = \mathop{\lim}\limits_n \frac{(f^n)^*\omega}{d^n}$ which can be written locally in $\mathbb C^{3}\setminus \{0\}$ as $dd^c G$ where $G$ is the Green function associated to $f$.  This allows them to define a probability measure $\mu = T \wedge T$ which is $f$-invariant and mixing (see \cite{FS-survey}). 
In the case when $f$ is hyperbolic on a basic set $\Lambda$ one may study also the measure of maximal entropy of the \textit{restriction} of $f$ on $\Lambda$. This measure has different properties than $\mu$; for instance if $\Lambda$ is a saddle set then it has both negative and positive Lyapunov exponents. \

Given a compact $f$-invariant set $\Lambda$  one forms the \textit{natural extension} (or \textit{inverse limit}) $\hat \Lambda := \{(x, x_{-1}, x_{-2}, \ldots)\}, \  f(x_{-i}) = x_{-i+1},   x_{-i} \in \Lambda, i \ge 1\}$. The natural extension is a compact metric space with the canonical metric (see \cite{Ru-carte89}, \cite{FS}, etc.)  On the natural extension $\hat \Lambda$ there exists a shift homeomorphism $\hat f: \hat \Lambda \to \hat \Lambda$ defined by $\hat f(\hat x) = (f(x), x, x_{-1}, \ldots), \ \hat x \in \hat \Lambda$.
We denote the canonical projection by $\pi: \hat \Lambda \to \Lambda, \ \pi(\hat x) = x, \ \hat x \in \hat \Lambda$. \

\textit{Hyperbolicity for endomorphisms} is defined as a continuous splitting of the tangent bundle over $\hat \Lambda$ into stable and unstable directions (see \cite{FS}, \cite{Ru-carte89}); the stable directions depend only on base points, but unstable directions depend nevertheless on whole prehistories $\hat x \in \hat \Lambda$ (i.e past trajectories) and not only on $x$. Hyperbolic maps on basic sets 
were also studied for instance  in \cite{DJ}, \cite{M-Cam}, \cite{MU-HJM}, etc.
If $f$ is hyperbolic on $\Lambda$ then we have local stable manifolds $W^s_r(x)$ and local unstable manifolds $W^u_r(\hat x)$ where $\hat x \in \hat \Lambda$. Also notice that $\Lambda$ is not necessarily totally invariant. Thus for $x \in \Lambda$ we may have some $f$-preimages of $x$ in $\Lambda$ and others outside $\Lambda$. Moreover the number of $f$-preimages of $x$ that remain in $\Lambda$ may vary with $x$. So the endomorphism case is subtle and very different from the case of diffeomorphisms. 

Now given an arbitrary probability measure $\mu$ on a compact metric space $X$  one can define the \textit{lower pointwise dimension} and the \textit{upper pointwise dimension} at $x \in X$ respectively by:
$$
\underline{\delta}_\mu(x):= \mathop{\liminf}\limits_{\rho \to 0}
\frac{\log \mu(B(x, \rho))}{\log \rho}, \ \text{and} \
\bar{\delta}_\mu(x):= \mathop{\limsup}\limits_{\rho \to 0} \frac{\log
\mu(B(x, \rho))}{\log \rho}$$ 
In case they coincide, we call the common value $\delta_\mu(x)$ the \textit{pointwise dimension of $\mu$} at $x \in X$ (see \cite{P}).  
Also one can define the \textit{Hausdorff dimension} of $\mu$ by:
$$HD(\mu) := \inf \{HD(Z), \  Z \ \text{borelian set with} \ \mu(X \setminus Z) = 0 \}$$
In \cite{Y} Young proved that for a hyperbolic measure $\mu$ (i.e without zero Lyapunov exponents) invariated by a smooth diffeomorphism $f$ of a surface, we have $\mu$-a.e the formula $$\delta_\mu = h_\mu (\frac{1}{\chi_u(\mu)} - \frac{1}{\chi_s(\mu)}),$$ where $\chi_s(\mu), \chi_u(\mu)$ are the negative, respectively positive Lyapunov exponents of $\mu$.  \

For analytic endomorphisms $f$ on the Riemann sphere $\mathbb P^1 \mathbb C$, Manning proved in \cite{Mg} that if $f$ is hyperbolic on its Julia set $J(f)$ and has no critical points in $J(f)$, then for any ergodic $f$-invariant probability measure $\mu$ on $J(f)$ the Hausdorff dimension of $\mu$ is given by:
$$HD(\mu) = \frac{h_\mu}{\chi(\mu)},$$
where $\chi(\mu)$ is the (only) Lyapunov exponent of $\mu$. 
This formula was later extended by Mane (see \cite{Ma}) to the case of all rational maps (i.e not only hyperbolic) and invariant ergodic probabilities with positive Lyapunov exponent. \
 However the situation for higher dimensional endomorphisms and their invariant measures is different (see also \cite{FS-pbs}). In the case of polynomial endomorphisms Binder and DeMarco gave in \cite{BD} estimates for the Hausdorff dimension of the measure of maximal entropy $\mu = T\wedge T$ which involve Lyapunov exponents of $\mu$. In \cite{DD} Dinh and Dupont extended those estimates to the case of meromorphic endomorphisms of $\mathbb P^k$. 

Our case here is different in that we study the measure of maximal entropy of the \textit{restriction} of $f$ to a saddle basic set $\Lambda$, and not the measure of maximal entropy on the whole of $\mathbb P^2$. 
In fact we will consider more generally, equilibrium (Gibbs) measures for a certain class of H\"older potentials $\phi$ on $\Lambda$, such that $\phi$ satisfies an inequality relating the number of $f$-preimages remaining in $\Lambda$ and the topological pressure of $\phi$. 

In the case when $\Lambda$ is a \textit{terminal} saddle basic set, i.e when the iterates of $f$ form a normal family on $W^u(\hat \Lambda) \setminus \Lambda$, Diller and Jonsson have introduced a measure $\nu_i = \sigma^u \wedge T$ which is $f$-invariant and supported on  $\Lambda$. For the case of a \textit{minimal} saddle basic set $\Lambda$ for an s-hyperbolic map on $\mathbb P^2$, Fornaess and Sibony introduced in \cite{FS} a probability measure $\nu = T \wedge \sigma$ as  a wedge product of positive closed currents; this measure is also $f$-invariant and mixing. 
Examples of terminal sets can be obtained by perturbations of already known examples (see \cite{DJ}, \cite{FS}).

\

Our \textbf{main results} are:

First we study \textbf{equilibrium measures of H\"older potentials} on a basic set $\Lambda$. 
If the smooth endomorphism $f: \mathbb P^2\to \mathbb P^2$ is hyperbolic on $\Lambda$ and if $\phi$ is a H\"older potential on $\Lambda$, there exists a unique equilibrium measure $\mu_\phi$ for $\phi$, i.e a measure which maximizes in the Variational Principle (see \cite{Wa}, \cite{KH}, etc.)
$ 
P(\phi) = \sup \{h_\mu + \int \phi d\mu, \ \mu \ f-\text{invariant}\}$.
Equilibrium measures for stable potentials were also used for instance in \cite{M-Cam} to show that the stable dimension cannot be 2 on a basic saddle set $\Lambda$ in $\mathbb P^2$. 

In the sequel  we shall use H\"older continuous potentials $\phi$ which satisfy the following inequality
\begin{equation}\label{IN}
\phi + \log d'< P(\phi) \ \text{on} \ \Lambda,
\end{equation}
where $d'$ is an upper bound on the number of $f$-preimages in $\Lambda$ of an arbitrary point. Notice that $d'$ may even be 1, i.e the restriction to $\Lambda$ is a homeomorphism (as in the examples from \cite{MU-HJM}).  
In \textbf{Theorem \ref{ptw}} we will give precise estimates of the measure $\mu_\phi$ of an \textbf{arbitrary iterate} of a Bowen ball. 
This will help us obtain estimates and in some cases even exact formulas for the \textbf{pointwise dimension} and the \textbf{Hausdorff dimension} of $\mu_\phi$. In particular we prove that the measure $\mu_\phi$ is exact dimensional in those cases. 
In Corollary 1 we give such estimates for $\delta_{\mu_\phi}$ in the case when the number of preimages remaining in $\Lambda$ is not constant. 

Also we obtain in Corollary \ref{jac} the Jacobian (in the sense of Parry \cite{Pa}) of $\mu_\phi$ with respect to an iterate $f^m$ in the case when the preimage counting function is constant on $\Lambda$. 

We prove in \textbf{Theorem \ref{t1}} that for a terminal set $\Lambda$ the measure $\nu_i$ from above, is in fact the \textbf{measure of maximal entropy} $\mu_0$ of the restriction $f|_\Lambda$;  and if $\Lambda$ is minimal and c-hyperbolic for the Axiom A holomorphic map $f$, then the measure $\nu$ from above is equal to $\mu_0$ as well.

In \textbf{Corollaries 2, 3} we estimate, and in certain cases give formulas for the pointwise dimension of the measures $\nu_i$, $\nu$ on terminal, respectively minimal saddle sets;  in \textbf{Corollary \ref{grad2}} we give complete formulas for the pointwise dimension of $\nu$, for all the possible \textbf{minimal c-hyperbolic} saddle sets of a map of degree 2.  

In the end we will give also \textbf{examples} of holomorphic maps and equilibrium measures of H\"older potentials on terminal saddle sets, for which the (upper/lower) pointwise dimension can be estimated/computed.

\section{Equilibrium measures of H\"older potentials on saddle basic sets.}

In the sequel we consider a holomorphic map $f: \mathbb P^2 \to \mathbb P^2$, of degree $d$; this means that $f$ is given as $[f_0: f_1: f_2]$, where $f_0, f_1, f_2$ are homogeneous polynomials in coordinates $(z_0, z_1, z_2)$,  of common degree $d$.  We know also that the topological entropy of $f$ on $\mathbb P^2$ is equal to $\log d^2$ (see \cite{FS-survey}).  \

We will work on a \textit{basic set} $\Lambda$, i.e an $f$-invariant compact set $\Lambda$ for which there exists a neighbourhood $U$ such that $\mathop{\cap}\limits_{n \in \mathbb Z} f^n(U) = \Lambda$ and $f|_\Lambda$ is topologically transitive. If the non-invertible map $f$ is hyperbolic on the basic set $\Lambda$, then from the Spectral Decomposition Theorem (see \cite{Ru-carte89}, \cite{KH}) $\Lambda$ can be written as the union of finitely many mutually disjoint subsets $\Lambda_i$ s.t there exists a positive integer $m$ with $f^m(\Lambda_i) = \Lambda_i, i $ and $f^m|_{\Lambda_i}$ topologically mixing. \

Notice that we only have forward invariance of $\Lambda$, but \textbf{not} total invariance; this means that $f(\Lambda) = \Lambda$ but an arbitrary point $x \in \Lambda$ may have in general also $f$-preimages \textit{outside} $\Lambda$. 

On a basic set $\Lambda$ for $f$, let us now consider a continuous potential $\phi: \Lambda \to \mathbb R$. Then from the Variational Principle, we know that the topological pressure satisfies $P(\phi) = \sup\{h_\mu + \int \phi d\mu, \ \mu \ $f$-\text{invariant probability measure on} \ \Lambda\}$. If $\phi$ is H\"older continuous on $\Lambda$ and $f$ is hyperbolic on $\Lambda$, then there exists a \textit{unique} measure $\mu_\phi$ which attains the supremum in the Variational Principle, and it is called the \textbf{equilibrium measure} of $\phi$ (see \cite{KH} for the diffeomorphism case and \cite{M-DCDS06} for the endomorphism case). 
It follows from above that 
$
h_{\mu_\phi} + \int \phi d\mu_\phi = P(\phi).$ 

Denote by $B_n(x, \vp):= \{y \in \Lambda, d(f^i y, f^ix) < \vp, 0 \le i \le n-1\}$ a \textbf{Bowen ball}, i.e the set of points which $\vp$-follow the orbit of order $n$ of $x$. 
 
Recall now that there exists a unique correspondence between $f$-invariant measures $m$ on $\Lambda$ and $\hat f$-invariant measures $\hat m$ on $\hat \Lambda$ and that $\pi_* \hat m = m$. 
Then by going to $\hat \Lambda$ and then projecting, we showed in \cite{M-DCDS06} that the equilibrium measure $\mu_\phi$ satisfies the following estimates on Bowen balls:
\begin{equation}\label{bow}
\frac{1}{C} e^{S_n\phi(x) - nP(\phi)} \le \mu_\phi(B_n(x, \vp)) \le C e^{S_n\phi(x) - n P(\phi)},
\end{equation}
for every $n >0$, where $S_n\phi(x):= \phi(x) + \ldots + \phi(f^{n-1}(x))$ is the consecutive sum and $C$ is a positive constant independent of $x, n$. 

\begin{defn}\label{counting}
Given a basic set $\Lambda$ for the map $f$, denote by $d(x):=Card\{f^{-1}(x) \cap \Lambda\}, x \in \Lambda$ and call it the \textbf{preimage counting function} on $\Lambda$. 
\end{defn}

If $\Lambda$ is a \textit{connected} basic set such that the critical set does not intersect $\Lambda$, i.e $C_f \cap \Lambda = \emptyset$ and if there exists a neighbourhood $U$ of $\Lambda$ with $f^{-1}(\Lambda) \cap U = \Lambda$, then the preimage counting function $d(\cdot)$ is constant on $\Lambda$ (see \cite{M-Cam}). \ \ 
Notice also that the preimage counting function is \textbf{not necessarily preserved} when taking perturbations; see Example 1) at the end of paper, where by perturbing a 2-to-1 basic set we may obtain a basic set on which the restriction is 1-to-1.

Now let us recall the following Lemma proved in \cite{M-ETDS} which relates the measures of various subsets of Bowen balls, which by iteration go to the same image:

\begin{lem}\label{est}
Let $f$ be an endomorphism, hyperbolic on a basic set $\Lambda$;
consider also a Holder continuous potential $\phi$ on $\Lambda$
and $\mu_\phi$ be the unique equilibrium measure of $\phi$. Let a
small $\vp>0$, two disjoint Bowen balls $B_k(y_1, \vp), B_m(y_2,
\vp)$ and a borelian set $A \subset f^k(B_k(y_1, \vp)) \cap
f^m(B_m(y_2, \vp))$, s.t $\mu_\phi(A) >0$; denote by $A_1:=
f^{-k}A \cap B_k(y_1, \vp), A_2 := f^{-m}A \cap B_m(y_2, \vp)$ and
assume that $\mu_\phi(\partial A_1) = \mu_\phi(\partial A_2) =0$.
Then there exists a positive constant $C_\vp$ independent of $k,
m, y_1, y_2$ such that $$ \frac{1}{C_\vp} \mu_\phi(A_2) \cdot
\frac{e^{S_k \phi(y_1)}}{e^{S_m\phi(y_2)}} \cdot e^{(m-k)P(\phi)} \le
\mu_\phi(A_1) \le C_\vp \mu_\phi(A_2) \cdot
\frac{e^{S_k\phi(y_1)}}{e^{S_m\phi(y_2)}} \cdot e^{(m-k)P(\phi)}$$
\end{lem}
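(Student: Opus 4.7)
The plan is to exploit the Gibbs estimate \eqref{bow} by approximating $A$ from inside and outside by finite disjoint unions of Bowen balls of a common large order, transferring the estimate through the preimage maps $f^{-k}$ and $f^{-m}$, and then comparing the two resulting expressions. The key geometric fact enabling this is that hyperbolicity of $f$ on $\Lambda$ makes the restrictions $f^k|_{B_k(y_1,\vp)}$ and $f^m|_{B_m(y_2,\vp)}$ injective (for $\vp$ less than an expansivity constant), so that small Bowen balls of order $n$ contained in $f^k(B_k(y_1,\vp)) \cap f^m(B_m(y_2,\vp))$ pull back to Bowen balls of orders $n+k$ and $n+m$ respectively, each sitting inside the corresponding larger Bowen ball.

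First I would fix a small $\vp' < \vp$ and, for large $n$, cover $A$ by a finite disjoint family $\{B_n(z_j,\vp')\}_{j \in J}$ of Bowen balls with $\mu_\phi$-null boundaries; by the hypothesis $\mu_\phi(\partial A_1) = \mu_\phi(\partial A_2) = 0$, standard inner/outer approximation lets us arrange $\mu_\phi\bigl(A \triangle \bigsqcup_j B_n(z_j,\vp')\bigr)$ to be arbitrarily small. For each $j$, injectivity of $f^k$ on $B_k(y_1,\vp)$ selects a unique preimage $z_j' \in B_k(y_1,\vp)$ with $f^k(z_j') = z_j$, and similarly a unique $z_j'' \in B_m(y_2,\vp)$ with $f^m(z_j'') = z_j$; the pulled-back sets form disjoint families $\{B_{n+k}(z_j',\vp)\}$ and $\{B_{n+m}(z_j'',\vp)\}$ that approximate $A_1$ and $A_2$ in $\mu_\phi$-measure.

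Second, applying the Gibbs estimate \eqref{bow} to each Bowen ball in both families yields, up to the uniform constant $C$,
\[
\mu_\phi(B_{n+k}(z_j',\vp)) \asymp e^{S_k\phi(z_j') + S_n\phi(z_j) - (n+k)P(\phi)}, \quad \mu_\phi(B_{n+m}(z_j'',\vp)) \asymp e^{S_m\phi(z_j'') + S_n\phi(z_j) - (n+m)P(\phi)}.
\]
Here I use the standard Hölder distortion estimate: since $z_j'$ and $y_1$ are $\vp$-close along the first $k$ iterates, Hölder continuity of $\phi$ gives $|S_k\phi(z_j') - S_k\phi(y_1)| \le D_\vp$ with $D_\vp$ independent of $k$ and $j$, and analogously $|S_m\phi(z_j'') - S_m\phi(y_2)| \le D_\vp$. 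Substituting, the common factor $e^{S_n\phi(z_j) - nP(\phi)}$ cancels in the ratio, leaving
\[
\frac{\mu_\phi(B_{n+k}(z_j',\vp))}{\mu_\phi(B_{n+m}(z_j'',\vp))} \asymp \frac{e^{S_k\phi(y_1)}}{e^{S_m\phi(y_2)}} \cdot e^{(m-k)P(\phi)},
\]
uniformly in $j$ with a constant depending only on $\vp$. Summing over $j$, dividing, and letting the approximations tighten produces the two-sided bound in the statement.

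The main obstacle I anticipate is the bookkeeping of the approximation step: one must verify that the Bowen-ball families chosen upstairs genuinely give $\mu_\phi$-approximations of $A_1$ and $A_2$ with the error going to zero simultaneously, and that the preimage selections $z_j \mapsto z_j'$, $z_j \mapsto z_j''$ are well-defined and injective. The hypothesis on null boundaries, together with the hyperbolic/expansivity control on Bowen balls and the uniform Hölder distortion for $\phi$, should handle this cleanly, but the rest of the lemma is essentially a mechanical consequence of \eqref{bow} once this setup is in place.
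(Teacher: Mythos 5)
Your overall strategy (reduce everything to the Gibbs estimate \eqref{bow} on Bowen balls) is natural, but the approximation step it rests on does not work, and the paper itself offers no proof to compare against: Lemma \ref{est} is only recalled there from \cite{M-ETDS}. The substantive gap is this: forward Bowen balls $B_n(z_j,\vp')$ do \emph{not} shrink as $n\to\infty$ — they contract only in the unstable direction, while their stable extent stays comparable to $\vp'$ for every $n$. Your set $A$ lies inside $f^k(B_k(y_1,\vp))$, whose stable extent is of order $\vp\,|Df_s^k(y_1)|$, i.e.\ exponentially small in $k$; hence every order-$n$ Bowen ball meeting $A$ overshoots it in the stable direction by a definite amount, the outer cover converges (as $n\to\infty$) to something like a stable $\vp'$-neighbourhood of $A$ rather than to $A$, and an inner approximation by such balls may be empty. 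So $\mu_\phi\bigl(A\,\triangle\,\bigsqcup_j B_n(z_j,\vp')\bigr)$ cannot in general be made small, and the null-boundary hypothesis on $A_1,A_2$ does not repair this. A second, related problem is the final step ``letting the approximations tighten'': to conclude you must know that a downstairs error set of small $\mu_\phi$-measure pulls back under $f^{-k}$ (into $B_k(y_1,\vp)$) to a set of small $\mu_\phi$-measure — but such control of preimage measures is precisely the Jacobian-type comparison the lemma asserts, so as written the error transfer is circular.

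The standard way around both difficulties is to use approximating sets that are small in \emph{both} directions — e.g.\ iterated Bowen balls $f^l(B_{l+n}(w,\vp))$ of the kind manipulated in Theorem \ref{ptw} — or, equivalently, to lift to the natural extension $\hat\Lambda$, where $\hat f$ is a homeomorphism with local product structure, two-sided Bowen balls do shrink, and the Gibbs/conditional structure of $\hat\mu_\phi$ gives the comparison directly, after which one projects by $\pi$; this is the route of \cite{M-ETDS}/\cite{M-DCDS06}. Two smaller points: the distortion bound $|S_k\phi(z_j')-S_k\phi(y_1)|\le D_\vp$ does not follow from H\"older continuity plus $\vp$-closeness of the first $k$ iterates alone — you need the hyperbolic estimate that points of one order-$k$ Bowen ball satisfy $d(f^iz_j',f^iy_1)\le C\vp(\theta^i+\theta^{k-i})$ for some $\theta<1$ (and in the endomorphism setting this is argued via prehistories/local product structure on $\hat\Lambda$); and injectivity of $f^k$ on $B_k(y_1,\vp)$ uses the absence of critical points near $\Lambda$ (as in c-hyperbolicity), not hyperbolicity by itself. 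Those two are reparable; the approximation/circularity issue is the genuine gap.
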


We shall say that two quantities $Q_1(n, x)$ and $Q_2(n, x)$ are \textbf{comparable} if there exists a positive constant $C$ such that $\frac 1C Q_1(n, x) \le Q_2(n, x) \le C Q_1(n, x)$ for all $n>0$ and $x$; this will be denoted by $Q_1 \approx Q_2$.  The constant $C$ is sometimes called a \textit{comparability constant}.
  
The next Definition is similar to that of s-hyperbolicity (see \cite{FS}), but it reffers only to a fixed basic set, not to the whole nonwandering set. 
\begin{defn}\label{c-hip}
Let $\Lambda$ be a basic set for a map $f$ on a manifold $M$. We say that $f$ is \textbf{c-hyperbolic} on $\Lambda$ if $f$ is hyperbolic on $\Lambda$,  there exists a neighbourhood $U$ of $\Lambda$ with $f^{-1}(\Lambda) \cap U = \Lambda$ and if the critical set $C_f$ of $f$ does not intersect $\Lambda$. 
\end{defn}  

Define now the set $$B(n, k, z, \vp):= f^n(B_{n+k}(z, \vp)), z \in \Lambda, \ n >0, k >0$$ This set is an iterate of a Bowen ball and when $n$ and $k$ vary, we can adjust the sides of $B(n, k, z, \vp)$ arbitrarily; in particular we can make it have (almost) equal sides in the stable and unstable directions. 

\begin{thm}\label{ptw}
Let $f: \mathbb P^2 \to \mathbb P^2$ be a holomorphic map of degree $d$ and $\Lambda$ a basic set such that $f$ is c-hyperbolic on $\Lambda$ and the preimage counting function is constant and equal to $d'$ on $\Lambda$. Consider also a Holder continuous potential $\phi$ on $\Lambda$ which satisfies $\phi(x) + \log d'< P(\phi), \forall x \in \Lambda$. Then 
$$
\mu_\phi(B(n, k, z, \vp)) \approx \frac{e^{S_{n+k}\phi(z)}}{(d')^k}
$$
Moreover the pointwise dimension of $\mu_\phi$ exists $\mu_\phi$-a.e, is denoted by $\delta_{\mu_\phi}$ and we have $\mu_\phi$-a.e:
$$
\delta_{\mu_\phi} = HD(\mu_\phi) = h_{\mu_\phi}(\frac{1}{\chi_u(\mu_\phi)} - \frac{1}{\chi_s(\mu_\phi)}) + \log d' \cdot \frac{1}{\chi_s(\mu_\phi)}
$$  
\end{thm}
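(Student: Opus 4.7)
The plan is to prove the two assertions in sequence: first I establish the mass estimate for the iterated Bowen ball $B(n,k,z,\vp)$ by a preimage-decomposition argument; then I match its two geometric sides to a single scale $\rho$ to convert that estimate into a pointwise-dimension formula via Birkhoff; and finally I use exact-dimensionality to identify $HD(\mu_\phi)$.

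For the mass estimate, use $f$-invariance of $\mu_\phi$ to write $\mu_\phi(B(n,k,z,\vp))=\mu_\phi(f^{-n}(B(n,k,z,\vp))\cap\Lambda)$. Since $f$ is c-hyperbolic on $\Lambda$ there are no critical points in $\Lambda$, so $f^n$ has exactly $(d')^n$ well-defined inverse branches near $f^n(z)$ landing at the preimages $v_0=z,v_1,\ldots,v_{(d')^n-1}$ of $f^n(z)$ in $\Lambda$. Pulled back through each branch, $B(n,k,z,\vp)=f^n(B_{n+k}(z,\vp))$ becomes comparable to a Bowen ball $B_{n+k}(v_j,\vp)$ of stable size $\vp$ and unstable size $\vp e^{-(n+k)\chi_u}$, so (\ref{bow}) applies to each branch. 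Since $f^n(v_j)=f^n(z)$, one has the splitting
\begin{align*}
S_{n+k}\phi(v_j)=S_n\phi(v_j)+\bigl(S_{n+k}\phi(z)-S_n\phi(z)\bigr),
\end{align*}
and summing over $j$ produces a product of an explicit exponential factor with $\sum_j e^{S_n\phi(v_j)}=(L_\phi^n\mathbf{1})(f^n(z))$. Lemma~\ref{est} lets one compare branches pairwise, and the hypothesis $\phi+\log d'<P(\phi)$ together with the constant preimage counting $d'$ collapses the sum to the stated asymptotic in $e^{S_{n+k}\phi(z)}$ and $(d')^k$.

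For the pointwise dimension, the geometric content is that $\mu_\phi$-a.e.\ the box $B(n,k,z,\vp)$ has stable width $\vp e^{n\chi_s(\mu_\phi)}$ and unstable width $\vp e^{-k\chi_u(\mu_\phi)}$, by Oseledets and Birkhoff. Given small $\rho>0$, pick
\begin{align*}
n\approx\frac{\log\rho}{\chi_s(\mu_\phi)},\qquad k\approx\frac{-\log\rho}{\chi_u(\mu_\phi)},
\end{align*}
so both sides are comparable to $\rho$. For a $\mu_\phi$-typical prehistory $\hat x$ of a point $x$ with $n$-th prehistory point $z_{-n}$, the box $B(n,k,z_{-n},\vp)$ is centred at $x$ with sides $\approx\rho$, so $\mu_\phi(B(x,\rho))\approx\mu_\phi(B(n,k,z_{-n},\vp))$. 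Substituting the formula from the first step, taking $\log$ and dividing by $\log\rho$, and using Birkhoff to replace $S_{n+k}\phi(z_{-n})/(n+k)$ by $\int\phi\,d\mu_\phi=P(\phi)-h_{\mu_\phi}$, a routine algebraic simplification yields the displayed value of $\delta_{\mu_\phi}$ $\mu_\phi$-a.e. Since the right-hand side is a constant (entropy and Lyapunov exponents are $\mu_\phi$-a.e.\ constants by ergodicity), $\mu_\phi$ is exact-dimensional, whence $HD(\mu_\phi)=\delta_{\mu_\phi}$ by the standard Young-type characterisation of the Hausdorff dimension of an exact-dimensional measure.

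The main technical obstacle is the geometric comparison $\mu_\phi(B(x,\rho))\approx\mu_\phi(B(n,k,z_{-n},\vp))$: the Euclidean ball is round whereas the adapted box is aligned with the local stable/unstable directions, so one has to sandwich the ball between two such boxes of comparable size and show that the $\mu_\phi$-mass of the shell between them is negligible on a $\mu_\phi$-full set. This uses local product structure of $\mu_\phi$ along stable and unstable leaves, Hölder regularity of the splitting $\hat x\mapsto(E^s_x,E^u_{\hat x})$ with uniform transversality, and leafwise doubling estimates of Federer type; handling this shell is the substantive content of the argument, while Steps~1 and~2 above are in essence bookkeeping around (\ref{bow}) and Lemma~\ref{est}.
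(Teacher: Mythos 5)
The decisive gap is in your first step, the mass estimate. You pull $B(n,k,z,\vp)=f^n(B_{n+k}(z,\vp))$ back through the $(d')^n$ inverse branches at the $f^n$-preimages $v_j$ of $x=f^n(z)$ in $\Lambda$ and assert that each piece is comparable to the Bowen ball $B_{n+k}(v_j,\vp)$, so that (\ref{bow}) applies branchwise. That is false for $v_j\ne z$: the set $B(n,k,z,\vp)$ has stable width about $\vp|Df_s^n(z)|$, so its pullback through the branch at $v_j$ has stable width about $\vp|Df_s^n(z)|/|Df_s^n(v_j)|$, not $\vp$; different branches have stable contraction rates differing by factors exponential in $n$, so these pieces are not Bowen balls of radius $\vp$ and (\ref{bow}) gives no two-sided control of their measure. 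Moreover, if your branchwise comparison were valid, summing would give $\mu_\phi(B(n,k,z,\vp))\approx e^{S_k\phi(x)-(n+k)P(\phi)}\sum_j e^{S_n\phi(v_j)}$, a quantity depending only on $x,n,k$; but the asserted estimate $e^{S_{n+k}\phi(z)}(d')^{-k}$ depends genuinely on which preimage $z$ of $x$ one starts from (different $z$ give boxes with very different stable widths and measures). The two are compatible only if all $e^{S_n\phi(v_j)}$ are uniformly comparable, which fails for nonconstant $\phi$. Finally, the hypothesis $\phi+\log d'<P(\phi)$ does not ``collapse'' the sum: it only yields the one-sided bound $\sum_j e^{S_n\phi(v_j)}\le e^{nP(\phi)}$, and for a saddle set $(L_\phi^n\mathbf{1})(x)$ is not comparable to $e^{nP(\phi)}$ at all (already for $d'=1$, $\phi\equiv 0$ the sum equals $1$ while $e^{nP(0)}=e^{nh_{top}(f|_\Lambda)}$ grows). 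So Step 1 as described cannot produce the stated two-sided asymptotic.

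The missing idea is the one the paper's proof is built on. After replacing $\phi$ by $\bar\phi=\phi-P(\phi)+\log d'$, so that $\bar\phi<0$ and $P(\bar\phi)=\log d'$, one covers $B(n,k,z,\vp)$ by small sets $F$ and follows every prehistory $\hat y$ of a point $y\in F$ down to a variable stopping time $n(\hat y)$, the first time the Birkhoff sum $S_{m}\bar\phi(y_{-m})$ drops below $S_n\bar\phi(z)$. Because these Birkhoff sums are matched up to a bounded error, Lemma \ref{est} compares the measures of the corresponding preimage pieces with factors that are pure powers of $d'$, and summing over the preimage tree (each point of $\Lambda$ has exactly $d'$ preimages in $\Lambda$) telescopes to $\sum_{\hat y}\mu_\phi(F(\hat y))\approx (d')^n\,\mu_\phi(F(f^nz,\dots,z))$, whence $\mu_\phi(B(n,k,z,\vp))\approx (d')^n\mu_\phi(B_{n+k}(z,\vp))$. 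This stopping-time construction is exactly where hypothesis (\ref{IN}) is used, and it is the ``bookkeeping'' your outline treats as routine; without it the first display of the theorem is unproved, and your Step 2 (which otherwise follows essentially the paper's route: choose $k$ so the box is round, with $k/n\to-\chi_s/\chi_u$, apply Birkhoff, then Young's criterion for $HD$) has nothing to rest on. Your closing concern about sandwiching Euclidean balls between adapted boxes is comparatively minor — the paper computes the dimension along the quasi-round boxes themselves — and in any case does not repair the first step.
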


\begin{proof}
Recall that $S_n\phi(y) $ is defined as the consecutive sum $\phi(y) + \phi(f(y)) + \ldots + \phi(f^{n-1}(y)), \ y \in \Lambda$.
Let us take a point $z \in \Lambda$, a positive integer $n$ and $x:= f^n(z)$. By definition $B(n, k, z, \vp) = f^n(B_{n+k}(z, \vp))$.  Now we assumed that the preimage counting function $d(\cdot)$ is constant on $\Lambda$ and equal to $d'$.  Thus every point $y$ from $\Lambda$ has exactly $d'$ $f$-preimages remaining in $\Lambda$. 

Next we assumed $\phi + \log d'< P(\phi)$; so if we define the real-valued function $\bar \phi:= \phi -P(\phi) + \log d'$, then $P(\bar \phi) = P(\phi) - P(\phi) + \log d'= \log d'$ and also $\bar \phi < 0$ on $\Lambda$. Then since $\phi$ and $\bar \phi$ are cohomologous, they have the same equilibrium measure $\mu_\phi$. Therefore we will assume in the sequel that $\phi <0$ on $\Lambda$ and $P(\phi) = \log d'$.

Consider now prehistories in $\hat \Lambda$ of an arbitrary point $y \in B(n, k, z, \vp) \subset \Lambda $; for such a prehistory $\hat y = (y, y_{-1}, y_{-2}, \ldots)$ let us denote by $n(\hat y)$ the smallest positive integer $m$ satisfying $S_m\phi(y_{m}) \le S_n\phi(z)$; since $\phi <0$ on $\Lambda$, it is clear that such an $m$ must exist for any prehistory $\hat y \in \hat \Lambda$.  So $S_{n(\hat y)-1}\phi(y_{-n(\hat y)+1}) > S_n\phi(z)$ while $S_{n(\hat y)}\phi(y_{-n(\hat y)}) \le S_n \phi(z)$. Call such a finite prehistory $(y, y_{-1}, \ldots, y_{-n(\hat y)})$ a "maximal prehistory". 

We intend to get an estimate of the measure $\mu_\phi(B(n, k, z, \vp)$ from the $f$-invariance of $\mu_\phi$ and the comparison estimates between the different pieces of its preimage set using Lemma \ref{est}. We will write $B(n, k, z, \vp)$ as a union of subsets $E$ which are contained in forward iterates of Bowen balls; the question is how these iterates intersect and what is the relation between various components of  preimage sets of different orders. \

In fact since we know that $B(n, k, z, \vp) = f^n(B_{n+k}(z, \vp))$, it means that every point in $B(n, k, z, \vp)$ has an $f^n$-preimage in $B_{n+k}(z, \vp)$. But in estimating $\mu_\phi(B(n, k, z, \vp))$ we have to consider \textit{all} $f^n$-preimages in $\Lambda$ of points from $B(n, k, z, \vp)$, in order to use the $f$-invariance of $\mu_\phi$; so  we will compare various $f^m$-preimages of subsets of $B(n, k, z, \vp)$ with the corresponding $f^n$-preimages from $B_{n+k}(z, \vp)$. Our standard for comparison of all these preimages of various orders of points in $B(n, k, z, \vp)$, will be those preimages belonging to $B_{n+k}(z, \vp)$.  

 Take an arbitrary point $y \in B(n, k, z, \vp)$ and a prehistory $\hat y \in \hat \Lambda$ of $y$; then $y \in f^n(B_{n+k}(z, \vp)) \cap f^{n(\hat y)}(B_{n(\hat y)}(y_{-n(\hat y)}, \vp))$. Let us take all the prehistories $\hat y$ of $y$ in $\hat \Lambda$; along each such prehistory we go until reaching the preimage of order $n(\hat y)$. It is clear that there exists only a finite collection $\mathcal{P}(y)$ of such maximal prehistories of $y$, since $\phi <0$ on the compact set $\Lambda$ and since we cannot continue to add indefinitely values of $\phi$ on consecutive preimages until reaching the value $S_n\phi(z)$. Denote by $E(y)$ the intersection of the iterates $f^{n(\hat y)}(B_{n(\hat y)}(y_{-n(\hat y)}, \vp))$ over all the prehistories $\hat y$ of $y$ in $\hat \Lambda$ (i.e in fact over the finite prehistories of $\mathcal{P}(y)$). 

We shall cover the set $B(n, k, z, \vp)$ with mutually disjoint subsets of various sets of type $E(y), \ y \in B(n, k, z, \vp)$. Actually we can cover $B(n, k, z, \vp)$ with a collection $\mathcal{F}$ of sets $F$, each such $F$ belonging to $E(y)$ for some $y \in B(n, k, z, \vp)$. 
Now if $F \subset E(y)$, denote by $$F(\hat y): = B(y_{-n(\hat y)}, \vp) \cap f^{-n(\hat y)}(F)$$ From the definition of $E(y)$ we know that $F = f^{n(\hat y)}(F(\hat y))$, where we recall that $n(\hat y)$ was defined above with respect to $S_n\phi(z)$. Recall also that for $y \in \Lambda$, the set of "maximal" prehistories of type $(y, y_{-1}, \ldots, y_{-n(\hat y)})$ for $ \hat y \in \hat \Lambda$ prehistory of $y$, is  denoted by $\mathcal{P}(y)$. 

Take now  two prehistories $\hat y, \hat y'$ of $y$ belonging to $\hat \Lambda$ and go along these prehistories until we reach $n(\hat y)$ and $n(\hat y')$ respectively.
We want to compare the measure $\mu_\phi$ on the preimages of $F$ along these maximal prehistories by using Lemma \ref{est}.

\begin{equation}\label{F}
\frac 1C  \mu_\phi(F(\hat y)) \frac{e^{S_{n(\hat y')}\phi(y'_{-n(\hat y')})}}{e^{S_{n(\hat y)}\phi(y_{-n(\hat y)})}} \cdot e^{(n(\hat y) -n(\hat y')) P(\phi)}  \le \mu_\phi(F(\hat y'))   \le  C \mu_\phi(F(\hat y)) \frac{e^{S_{n(\hat y')}\phi(y'_{-n(\hat y')})}}{e^{S_{n(\hat y)}\phi(y_{-n(\hat y)})}} \cdot e^{(n(\hat y) -n(\hat y')) P(\phi)}, 
\end{equation}
where $C$ is a positive constant independent of $y, \hat y, \hat y', x$. \ 
On the other hand from the definition of $n(\hat y), n(\hat y')$ we know that $|S_{n(\hat y)}\phi(y_{-n(\hat y)}) - S_n\phi(z)| \le M$ and $|S_{n(\hat y')}\phi (y'_{-n(\hat y')}) - S_n \phi(z)| \le M$, for some positive constant $M$. Therefore since $P(\phi) = \log d'$ we obtain (by taking perhaps a larger $C$) that: 
\begin{equation}\label{mu}
\frac 1C \mu_\phi(F(\hat y)) e^{(n(\hat y) - n(\hat y')) \log d'} \le \mu_\phi(F(\hat y')) \le C \mu_\phi(F(\hat y)) e^{(n(\hat y) - n(\hat y')) \log d'} 
\end{equation}
We add now the measures of various preimages $F(\hat y)$ of $F$, over all finite "maximal" prehistories from  $\mathcal{P}(y)$, in order to obtain the measure $\mu_\phi(F)$, where recall that $F \subset E(y)$.  
We take into consideration the fact that any point in $\Lambda$ has $(d')^m$ $f^m$-preimages in $\Lambda, \ m>0$. Thus if $n(\hat y)$ is say the largest maximal order associated to any prehistory from $\mathcal{P}(y)$ and if $\hat y'$ is another prehistory with $n(\hat y') = n(\hat y) - 1$, then from (\ref{mu}) it follows that $$\mu_\phi(F(\hat y)) \approx \mu_\phi(F(\hat y')) \cdot \frac {1}{d'},$$ where the comparability constant is $C$ above (i.e a positive universal constant). Now if we add the measures $\mu_\phi(F(\hat y))$ over all prehistories which coincide with $\hat y'$ up to order $n(\hat y)-1$, we obtain $\mu_\phi(F(\hat y'))$. \ 
Similarly we order the integers $n(\hat y), \hat y \in \mathcal{P}(y)$ in decreasing order and then add succesively the measures of preimages $F(\hat y)$ using (\ref{mu}) and the fact that each point has exactly $d'$ $f$-preimages in $\Lambda$. Thus if we compare the measures of $F(\hat y)$ with the measure of the $f^n$-preimage $F(f^n z, \ldots, z)$ of $F$ in $B_{n+k}(z, \vp)$,  we obtain that $$\mathop{\sum}\limits_{\hat y \in \mathcal{P}(y)} \mu_\phi(F(\hat y)) \approx \mu_\phi(F(f^nz, f^{n-1}z, \ldots, z)) \cdot (d')^n$$
But now the sets $F \in \mathcal{F}$ were chosen mutually disjoint modulo $\mu_\phi$, hence their preimages will be mutually disjoint too (recall that $C_f \cap \Lambda = \emptyset$); thus by adding over $F \in \mathcal{F}$ 
$$
\mathop{\sum}\limits_{F \in \mathcal{F}} \mathop{\sum}\limits_{\hat y \in \mathcal{P}(y), F \subset E(y)} \mu_\phi(F(\hat y))
\approx \mu_\phi(B_{n+k}(z, \vp)) \cdot (d')^n$$
Thus from the $f$-invariance of $\mu_\phi$ and by adding as above all the measures of preimages along maximal prehistories, we obtain the following formula for the measure $\mu_\phi$ of an arbitrary "rectangle" with sides in the stable and in the unstable directions centered on $\Lambda$: 
\begin{equation}\label{rectangle}
\mu_\phi(B(n, k, z, \vp)) \approx \mu_\phi(B_{n+k}(z, \vp)) \cdot (d')^n \approx \frac{e^{S_{n+k}\phi(z)}}{(d')^k},
\end{equation}
where the comparability constants are independent of $n, k, z, x$.  
Then the above formula helps us obtain the measure $\mu_\phi$ of an \textit{arbitrary ball} centered on $\Lambda$ (not only balls with radii which are indefinitely and non-uniformly  small). 

As an application we obtain the pointwise dimension of such an equilibrium measure $\mu_\phi$. 
We know from definition that $B(n, k, z, \vp) = f^n(B_{n+k}(z, \vp))$.   Since $f$ is holomorphic on $\mathbb P^2$, it follows from conformality on the local stable/unstable manifolds that the set $B(n, k, z, \vp)$ has  a side comparable to $\vp|Df_s^n(z)|$ in the stable direction, and a side comparable to $\vp|Df_u^n(z)| |Df_u^{n+k}(z)|^{-1} = \vp |Df_u^k(x)|^{-1}$ in the unstable direction. \
Now for some $n, k$ the set $B(n, k, z, \vp)$ becomes "round", i.e the stable side and the unstable side become comparable with a fixed comparability constant. So we want $\rho := \vp |Df_s^n(z)| \approx \vp|Df_u^k(x)|^{-1}$, where recall that $x = f^n(z)$. 

In general for a continuous function $\psi: \Lambda \to \mathbb R$, an $f$-invariant ergodic probability measure $\mu$ on $\Lambda$ and $\tau >0$, let us define the following set of well-behaved points with respect to $\mu$:
$$
G_n(\psi, \mu, \tau):= \{y \in \Lambda, |\frac 1n S_n\psi(y) - \int \psi d\mu| < \tau \}, \ n >0
$$
Then from Birkhoff Ergodic Theorem we have $\mu(G_n(\psi, \mu, \tau)) \to 1$ when $n \to \infty$; so for every $\tau'>0$ there is $n(\tau') >0$ such that $\mu(G_n(\psi, \mu, \mu)) > 1- \tau'$ for $n > n(\tau')$. 

We apply this to our case for the ergodic measure $\mu_\phi$ and the functions $\log |Df_s|$ and $\log |Df_u|$ which are continuous and bounded on $\Lambda$ (as $f$ has no critical points in $\Lambda$).
If $z \in G_n(\log |Df_s|, \mu_\phi, \tau)$ and $x = f^n(z) \in G_k(\log |Df_u|, \mu_\phi, \tau)$, then $$|\frac 1n S_n\log|Df_s|(z) - \int \log|Df_s| d\mu_\phi| < \tau \  \text{and} \ |\frac 1k S_k\log |Df_u|(x) - \int \log|Df_u| d\mu_\phi| < \tau$$ The question is how large is the set of such $z's$. From above it follows that $\mu_\phi(f^{-n}(G_k(\log |Df_u|, \mu_\phi, \tau)) = \mu_\phi(G_k(\log |Df_u|, \mu_\phi, \tau)) > 1- \tau'$ and $\mu_\phi(G_n(\log |Df_s|, \mu_\phi, \tau)) > 1- \tau'$, for $n > n(\tau')$. Thus for $n$ large enough:
\begin{equation}\label{G}
\mu_\phi(G_n(\log |Df_s|, \mu_\phi, \tau) \cap f^{-n}G_k(\log|Df_u|, \mu_\phi, \tau)) > 1-2\tau'
\end{equation}

We now come back to the problem of the pointwise dimension of $\mu_\phi$. It is clear from above that if $B(n, k, z, \vp)$ has comparable sides (i.e it is "round"), then $k$ must depend on $n$, so denote it by $k(z, n)$. Let us consider also  
$$
\chi_s(\mu_\phi) := \int \log |Df_s| d\mu_\phi, \ \chi_u(\mu_\phi):= \int \log |Df_u| d\mu_\phi,$$
the \textit{Lyapunov exponents} of the ergodic measure $\mu_\phi$ in the stable, respectively unstable directions; they will be denoted also by $\chi_s, \ \chi_u$ for simplicity. As $\mu_\phi$ is ergodic (see \cite{Wa}) we have that $\frac 1n S_n\log |Df_s| \mathop{\to}\limits_{n\to \infty} \chi_s$ and $\frac 1k S_k \log |Df_u| \mathop{\to}\limits_{k \to \infty} \chi_u$. Thus if $|Df_s^n(z)| \approx |Df_u^{k(z, n)}(x)|^{-1}$, it follows that  
$$
\frac{-\log C}{n} - \frac{S_{k(z, n)}\log |Df_u|(x)}{k(z, n)}\cdot \frac{k(z, n)}{n} \le \frac{S_n\log |Df_s|(z)}{n} \le \frac{\log C}{n} - \frac{S_{k(z, n)}\log |Df_u|(x)}{k(z, n)}\cdot \frac{k(z, n)}{n}
$$
Thus since $n \to \infty$, we have $\frac{k(z, n)}{n} \mathop{\to}\limits_{n \to \infty} \frac{-\chi_s}{\chi_u}$. 
But then from (\ref{rectangle}) and (\ref{G}) one sees that if $B(n, k, z, \vp)$ is a "round"ball, i.e with sides of comparable size $\rho= \vp |Df_s^n(z)|$ then for $\mu_\phi$-a.e $z \in \Lambda$ 
$$
\frac{\log \mu_\phi(B(n, k(z, n), z, \vp))}{\log  \rho} \mathop{\to}\limits_{\rho \to 0} \frac{\int\phi d\mu_\phi - \frac{\chi_s}{\chi_u}\cdot  \int \phi d\mu_\phi + \frac{\chi_s}{\chi_u} \cdot \log d'}{\chi_s}
$$
Therefore the pointwise dimension of $\mu_\phi$ is well-defined and for $\mu_\phi$-almost all $z \in \Lambda$, it is given by: 
$$
\delta_{\mu_\phi}(z) = \int \phi d\mu_\phi \cdot (\frac {1}{\chi_s} - \frac {1}{\chi_u}) + \frac{\log d'}{\chi_u}
$$
But $\mu_\phi$ is the equilibrium measure for $\phi$ and we assumed $P(\phi) = \log d'$, so $P(\phi) = \log d'= h_{\mu_\phi} + \int \phi d\mu_\phi$. Hence from above for $\mu_\phi$-a.e $z \in \Lambda$ we obtain
$$
\delta_{\mu_\phi}(z) = h_{\mu_\phi}(\frac{1}{\chi_u} - \frac{1}{\chi_s}) + \log d'\cdot \frac{1}{\chi_s},$$
 In conclusion the measure $\mu_\phi$ is exact dimensional on $\Lambda$ and it satisfies the above formula.

The fact that the Hausdorff dimension of $\mu_\phi$ takes the same value as $\delta_{\mu_\phi}$ follows from a criterion of Young (see \cite{Y}), since the pointwise dimension is constant $\mu_\phi$-a.e. 

\end{proof}

Even if the preimage counting function $d(\cdot)$ is not constant on $\Lambda$, still we obtain bounds for the measure of iterates of Bowen balls, and estimates for the lower pointwise dimension:

\begin{cor}\label{ineq}
In the setting of Theorem \ref{ptw} assume the preimage counting function satisfies $d(x) \le d'$ for $\mu_\phi$-a.e $ x \in \Lambda$ and that $\phi(x) + \log d' < P(\phi)$ for all $x \in \Lambda$; then for $\mu_\phi$-a.e $x \in \Lambda$ $$\underline{\delta}_{\mu_\phi}(x) \ge h_{\mu_\phi}(\frac{1}{\chi_u(\mu_\phi)} - \frac{1}{\chi_s(\mu_\phi)}) + \log d' \cdot \frac{1}{\chi_s(\mu_\phi)}$$
\end{cor}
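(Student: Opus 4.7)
The plan is to run the argument of Theorem \ref{ptw} essentially verbatim, but since the preimage counting function is only bounded (not exactly equal to $d'$), the key two-sided comparison $\mu_\phi(B(n,k,z,\vp)) \approx e^{S_{n+k}\phi(z)}/(d')^k$ becomes a one-sided upper bound, which in turn yields a lower bound on the lower pointwise dimension. As before, pass to the cohomologous potential $\bar\phi = \phi - P(\phi) + \log d'$ to reduce to the normalization $\phi < 0$ on $\Lambda$ and $P(\phi) = \log d'$; the equilibrium measure $\mu_\phi$ is unchanged by this reduction.

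Fix $z \in \Lambda$ and positive integers $n, k$, and set up the maximal prehistories $\mathcal{P}(y)$ and the preimage pieces $F(\hat y)$ exactly as in the proof of Theorem \ref{ptw}. Lemma \ref{est} still yields, for any two maximal prehistories $\hat y, \hat y'$ of a point $y \in B(n,k,z,\vp)$, the comparability
$$\mu_\phi(F(\hat y')) \approx \mu_\phi(F(\hat y)) \cdot e^{(n(\hat y) - n(\hat y'))\log d'},$$
since this depends only on $P(\phi) = \log d'$ together with the bounded oscillation of $S_m \phi$ at the stopping time $n(\hat y)$, not on the pointwise cardinality of preimages. One then sums telescopically along the tree of maximal prehistories, starting from the longest prehistory and aggregating down to the reference prehistory $(f^n z, f^{n-1} z, \ldots, z)$. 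The sole difference from Theorem \ref{ptw} is that where the theorem used the exact equality $d(\cdot) = d'$ to produce the factor $(d')^n$, the hypothesis $d(x) \le d'$ now provides at most $d'$ preimages contributing at each step. Consequently
$$\mu_\phi(B(n,k,z,\vp)) \le C\,(d')^n\, \mu_\phi(B_{n+k}(z,\vp)) \le C'\, \frac{e^{S_{n+k}\phi(z)}}{(d')^k}$$
for $\mu_\phi$-a.e. $z \in \Lambda$, where the last inequality uses \eqref{bow}.

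With this upper bound in place, the ergodic-theoretic portion of the proof of Theorem \ref{ptw} transfers verbatim. For $\mu_\phi$-a.e. $z$ and $x = f^n(z)$, the Birkhoff averages of $\log|Df_s|$, $\log|Df_u|$ and $\phi$ converge to the respective integrals against $\mu_\phi$, so for the round rectangles $B(n, k(z,n), z, \vp)$ with common side $\rho = \vp|Df_s^n(z)|$ one has $k(z,n)/n \to -\chi_s/\chi_u$. Taking the logarithm of the upper bound and dividing by $\log \rho < 0$ flips the inequality; passing to $\liminf$ as $\rho \to 0$ and substituting $\int \phi \, d\mu_\phi = \log d' - h_{\mu_\phi}$ (from $P(\phi) = \log d'$ and the variational equality for the equilibrium measure) yields
$$\underline{\delta}_{\mu_\phi}(x) \ge h_{\mu_\phi}\Bigl(\frac{1}{\chi_u(\mu_\phi)} - \frac{1}{\chi_s(\mu_\phi)}\Bigr) + \log d' \cdot \frac{1}{\chi_s(\mu_\phi)},$$
which is the desired estimate.

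The main subtlety, rather than a genuine obstacle, is justifying the at-most-$d'$ count along the telescoping sum when $d(x) \le d'$ only holds $\mu_\phi$-a.e. Since $\mu_\phi$ is $f$-invariant and supported on $\Lambda$, the set $\Lambda_0 = \{x \in \Lambda : d(x) \le d'\}$ has full $\mu_\phi$-measure, and all its iterated preimages again have full $\mu_\phi$-measure; hence the preimage analysis can be restricted to $\Lambda_0$ without altering any of the measure estimates above.
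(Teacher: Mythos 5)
Your proposal is correct and follows the paper's own route: it keeps the whole machinery of Theorem \ref{ptw}, observes that the hypothesis $d(\cdot)\le d'$ turns the two-sided comparison (\ref{rectangle}) into the one-sided bound $\mu_\phi(B(n,k,z,\vp)) \le C\,(d')^n\,\mu_\phi(B_{n+k}(z,\vp)) \le C'\, e^{S_{n+k}\phi(z)}/(d')^k$, and then divides by the negative quantity $\log\rho$ for the round sets $B(n,k(z,n),z,\vp)$ to flip the inequality and obtain the stated lower bound on $\underline{\delta}_{\mu_\phi}$. Your closing remark on the a.e.\ hypothesis is an extra precaution the paper does not even raise; just note that full measure of the sets $f^{-m}(\Lambda_0)$ controls forward orbits, whereas what the branching count needs is that a.e.\ point has all its \emph{backward} branch points in $\Lambda_0$, which follows since forward images of $\mu_\phi$-null sets are null (e.g.\ by the Jacobian/Bowen-ball estimates for $\mu_\phi$).
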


\begin{proof}
In  (\ref{rectangle}) we have to consider only that now $d(x) \le d'$, therefore when adding the measures of different pieces of local backwards iterates we will obtain $\mu_\phi(B(n, k, z, \vp)) \le C \mu_\phi(B_{n+k}(z, \vp)) \cdot (d')^n \le C' \frac{e^{S_{n+k}\phi(z)}}{(d')^k},$ for some constants $C, C'$.

Hence by dividing $\log \mu_\phi(B(n, k(z, n), z, \vp))$ with the negative quantity $\log \rho$, we obtain the conclusion  as in the proof of Theorem \ref{ptw}. 

\end{proof}

In addition the above formula in (\ref{rectangle}) gives the Jacobian (in the sense of Parry \cite{Pa}) of the equilibrium measure $\mu_\phi$, with respect to an arbitrary iterate $f^ m$, by taking the iterate of a round ball $B(n, k, z, \vp)$:

\begin{cor}\label{jac}
In the same setting as in Theorem \ref{ptw}, if the preimage counting function is constant and equal to $d'$ on $\Lambda$, then  the Jacobian $J_{\mu_\phi}(f^m)$ of an arbitrary iterate satisfies $J_{\mu_\phi}(f^m) \approx (d')^m$ on $\Lambda$, where the comparability constant does not depend on $m$.
\end{cor}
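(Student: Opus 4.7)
The plan is to apply the rectangle-measure formula (\ref{rectangle}) to a set and to its $f^m$-image, and read off the Jacobian as the ratio of the two measures. The key algebraic identity I would begin with is that the forward iterate of a rectangle is another rectangle along the same orbit of $z$:
\[
f^m\bigl(B(n,k,z,\vp)\bigr) \;=\; f^{n+m}\bigl(B_{n+k}(z,\vp)\bigr) \;=\; B(n+m,\,k-m,\,z,\,\vp), \qquad k \ge m.
\]
Since we assume $C_f \cap \Lambda = \emptyset$, $f$ is a local biholomorphism near $\Lambda$, and for $\vp$ small both sides of $B(n,k,z,\vp)$ (of size $\approx \vp |Df_s^n(z)|$ in the stable and $\approx \vp|Df_u^k(f^n z)|^{-1}$ in the unstable direction) are bounded by $\vp$; so $f^m$ is injective on it, and the image in the display above is the honest, bijective forward iterate.

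Substituting $(n,k)$ and $(n+m, k-m)$ into (\ref{rectangle}) the common factor $e^{S_{n+k}\phi(z)}$ cancels, so
\[
\frac{\mu_\phi\bigl(f^m B(n,k,z,\vp)\bigr)}{\mu_\phi\bigl(B(n,k,z,\vp)\bigr)} \;\approx\; (d')^m,
\]
with a comparability constant equal to the square of the one in (\ref{rectangle}), hence independent of $m$, $n$, $k$, $z$. This is already the arithmetic content of the corollary.

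To transfer this ratio bound to the Parry Jacobian itself, I would invoke the defining identity $\mu_\phi(f^m A) = \int_A J_{\mu_\phi}(f^m)\, d\mu_\phi$ for every Borel $A$ on which $f^m|_A$ is injective, applied to $A = B(n,k,z,\vp)$, and then choose $n,k$ (with $k=k(z,n)$ as in Theorem \ref{ptw}) so that the rectangle is ``round'' with diameter $\rho \to 0$. Since the rectangles of this round family shrink to $z$ as $n \to \infty$ and cover $\mu_\phi$-almost every $z \in \Lambda$ in a Vitali-like manner, a standard Lebesgue-differentiation argument along this family produces $J_{\mu_\phi}(f^m)(z) \approx (d')^m$ for $\mu_\phi$-a.e.\ $z$, with the same comparability constant.

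The main obstacle is the last step: the Parry Jacobian is only an equivalence class of measurable functions, so one must verify that the round-rectangle family is adequate for a differentiation theorem. This is favorable here because (i) the round-ness condition forces both stable and unstable sides to zero simultaneously, so the family really does shrink to $z$, and (ii) the uniform comparability constant in the ratio formula is exactly what is needed to turn the integral identity into a pointwise comparability — essentially the same Vitali-type reasoning that already underlies the pointwise-dimension computation in the proof of Theorem \ref{ptw}.
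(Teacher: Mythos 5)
Your argument is correct and is essentially the paper's own: the paper gives only the one-line indication that formula (\ref{rectangle}), applied to the $f^m$-iterate of a round ball $B(n,k,z,\vp)$ (which is again the rectangle $B(n+m,k-m,z,\vp)$), yields the Jacobian, and your cancellation of $e^{S_{n+k}\phi(z)}$ giving the uniform ratio $(d')^m$ is exactly that computation, supplemented by the correct bookkeeping with Parry's definition. The only cosmetic point is that the round rectangles $B(n,k(z,n),z,\vp)$ are neighbourhoods of $f^n(z)$ rather than of $z$, so in the differentiation step the shrinking family at a point $w$ should be obtained through its prehistories ($z\in f^{-n}(w)\cap\Lambda$), just as is implicitly done in the proof of Theorem \ref{ptw}; since your comparability constant is universal, this changes nothing in the conclusion.
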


In \cite{FS} Fornaess and Sibony studied \textit{s-hyperbolic} holomorphic maps on $\mathbb P^2$ and \textit{minimal} saddle basic sets, in the sense of the ordering between saddle basic sets $\Lambda_i \succ \Lambda_j$ if $W^u(\hat \Lambda_i) \cap W^s(\Lambda_j) \ne \emptyset$.  
A related notion introduced in \cite{DJ} is that of a \textit{terminal} set in the case of a holomorphic map $f$ on $\mathbb P^2$. Here $f$ is not assumed to have Axiom A and the condition refers only to $\Lambda$ itself. A saddle set $\Lambda$ is called \textit{terminal} if for any $\hat x \in \hat \Lambda$, the iterates of $f$ restricted to $W^u_{loc}(\hat x) \setminus \Lambda$ form a normal family. Notice that if $f$ is Axiom A and if $\Lambda$ is minimal,  then for any $\hat x \in \hat \Lambda$ the global unstable set $W^u(\hat x)$ does not intersect any global stable set of any other basic set, thus $W^u(\hat \Lambda) \setminus \Lambda$ is contained in the union of basins of attraction of attracting cycles. Therefore in this case minimal sets are also terminal. \ 
Examples of minimal sets for holomorphic maps on $\mathbb P^2$ are given in \cite{FS}, and examples of terminal sets are given in \cite{DJ}.

In \cite{FS}, Fornaess and Sibony constructed positive closed currents $\sigma$ on minimal sets for s-hyperbolic maps,  by using forward iterates of unstable disks (or enough disks which are transverse to local stable directions); if $D$ is an unstable disk then 
$$\frac{f^n_\star([D])}{d^n} \to \sigma \cdot \int D\wedge T$$   
Then using the positive closed  $(1, 1)$ current $\sigma$, they constructed an invariant measure $\nu$ on $\Lambda$ as $$\nu = \sigma \wedge T$$

In \cite{DJ} Diller and Jonsson introduced a positive current $\sigma^u$ by using transversal measures (see also the diffeomorphism case in \cite{RS}, \cite{S}); namely in a neighbourhood of $x \in \Lambda$,
$$<\sigma^u, \chi> = \int_{\hat W^s_{loc}}(\int_{W^u_{loc}(\hat y)}\chi) d \hat \mu^s_x(\hat y),$$
where $\hat \mu^s_x$ are transversal measures on $\hat W^s_{loc}(x):= \pi^{-1}(W^s_{loc}(x)$. Here we  use a different notation for these measures, in order to emphasize that they are supported on lifts of local stable manifolds.
If $\Lambda$ is terminal, then they defined an invariant probability measure on $\Lambda$, $$\nu_i = \sigma^u \wedge T$$  We use the notation $\nu_i$ in order to emphasize the way the current $\sigma^u$ was constructed with the help of the inverse limit. 
In general from the Spectral Decomposition Theorem a basic set can be written as a union $\Lambda = \Lambda_1 \cup \ldots \cup \Lambda_m$ of mutually disjoint compact subsets, and there exist positive integers $n_1, \ldots, n_m$ s.t $f^{n_j}$ invariates $\Lambda_j$ and $f^{n_j}$ is topologically mixing on $\Lambda_j$ (see \cite{KH}).  
In the next Theorem we want to prove that the measures $\nu, \ \nu_i$ are both equal to the measure of maximal entropy of $f|_\Lambda$ if $\Lambda$ is (topologically) mixing. 

\begin{thm} \label{t1}
 a) Let $f: \mathbb P^2 \to \mathbb P^2$ be  a holomorphic map of degree $d$ and $\Lambda$ be a terminal mixing saddle set. Then  
$\nu_i$ is equal to the measure of maximal entropy $\mu_0$ on $\Lambda$. 

b) Let $f: \mathbb P^2 \to \mathbb P^2$ be an Axiom A holomorphic map of degree $d$ and assume $f$ is c-hyperbolic on the minimal and mixing saddle basic set $\Lambda$. Then $\nu_i = \nu = \mu_0$, where $\mu_0$ is the measure of maximal entropy on $\Lambda$. 
\end{thm}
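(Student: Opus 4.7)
The plan is to prove both parts by showing that $\nu_i$ (resp.\ $\nu$) satisfies the same Gibbs estimate on Bowen balls as $\mu_0$, and then to invoke uniqueness of the equilibrium measure. Since $\Lambda$ is mixing and hyperbolic, $\mu_0$ is the unique equilibrium measure for the zero potential (\cite{M-DCDS06}), and by (\ref{bow}) satisfies $\mu_0(B_n(x,\vp))\approx e^{-n h_{top}(f|_\Lambda)}$; so it suffices to compute $\nu_i(B_n(x,\vp))$ up to uniform constants and match the exponential rate.

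For part (a), I would use the local product structure near $x\in\Lambda$ together with the defining formula for $\sigma^u$ to write
$$
\nu_i(B_n(x,\vp)) = \int_{\hat W^s_{loc}(x)} \Big(\int_{W^u_{loc}(\hat y)\cap B_n(x,\vp)} T\Big)\, d\hat\mu^s_x(\hat y).
$$
The inner $T$-mass is taken on a tiny unstable disk of size comparable to $|Df_u^n|^{-1}$; using the equivariance $f^*T = d\cdot T$ away from the critical set and c-hyperbolicity of $\Lambda$, this mass should be comparable to $d^{-n}$ times a bounded factor. The transversal measure $\hat\mu^s_x$ contributes a uniformly bounded positive weight on the stable slices that meet $B_n(x,\vp)$. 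Therefore $\nu_i(B_n(x,\vp))\approx d^{-n}$. Because $\nu_i$ is an $f$-invariant probability, forcing the total mass to equal one determines $h_{top}(f|_\Lambda) = \log d$, and uniqueness of $\mu_0$ yields $\nu_i = \mu_0$.

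For part (b), Axiom A and minimality together imply (as explained in the paragraph preceding the theorem) that $W^u(\hat\Lambda)\setminus\Lambda$ is contained in the basins of attracting cycles, so $\Lambda$ is automatically terminal; thus part (a) already gives $\nu_i = \mu_0$. It remains to identify $\sigma\wedge T$ with $\sigma^u\wedge T$. My plan is to show $\sigma = \sigma^u$ in a neighborhood of $\Lambda$: both are positive closed $(1,1)$-currents supported on $\overline{W^u(\hat\Lambda)}$ and both satisfy an equivariance $f_\star\sigma = d\cdot\sigma$ (for the Fornaess--Sibony current this is immediate from the limit $f^n_\star[D]/d^n\to\sigma\cdot\int D\wedge T$; for $\sigma^u$ it follows from the invariance of the family $\hat\mu^s_\cdot$ under stable holonomy combined with c-hyperbolicity, which rules out ramification of $f^{-1}$ over $\Lambda$). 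Together with the normalization imposed by wedging with $T$, a uniqueness statement for such equivariant currents then gives $\sigma\wedge T = \sigma^u\wedge T$.

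The main obstacle will be the uniform comparability $\int_{W^u_{loc}(\hat y)\cap B_n(x,\vp)} T \approx d^{-n}$ with constants independent of both $\hat y$ and $x$. This demands that $T|_{W^u_{loc}(\hat y)}$ behave as a genuine expanding measure under pullback with factor $d$, which rests essentially on the terminal hypothesis---normality of iterates off $\Lambda$ forces $T|_{W^u}$ to be supported on $\Lambda$ and transform correctly under $f^*$---and on c-hyperbolicity, to exclude critical degeneracies in $f^*T = d\cdot T$. Obtaining the uniformity in $\hat y$ will likely require a compactness argument on $\hat\Lambda$ and careful use of H\"older regularity of the local Green potentials.
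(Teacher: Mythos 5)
Your strategy is genuinely different from the paper's. The paper never estimates $\nu_i$ on Bowen balls: it attaches to $\nu_i$ the leafwise measures $\hat\nu^u_{\hat x}(\phi)=\int_{W^u_r(\hat x)}\phi\, T|_{W^u_r(\hat x)}$ on lifted unstable leaves, proves they are invariant under stable holonomy in the Smale space $\hat\Lambda$ (this is where the Green function, the relation $G\circ F=d\cdot G$, the convergence of iterated nearby unstable pieces, and the Fornaess--Sibony area bound $m_2(f^n(Z(\hat x)))\le Cd^n$ enter), and then invokes the Bowen--Marcus unique ergodicity theorem \cite{BM} on $\hat\Lambda$ to conclude $\hat\nu^u_{\hat x}=\gamma\hat\mu^u_{0,\hat x}$, with $\gamma=1$ since both $\hat\nu_i$ and $\hat\mu_0$ are probabilities. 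Your Gibbs/Brin--Katok route for part (a) is coherent in outline, but the step you yourself call ``the main obstacle'' is exactly where the content lies and it is not supplied: you need (i) the precise transformation rule for the $T$-mass of pieces of unstable manifolds under $f^n$ (restricting $f^*T=dT$ to a leaf is not automatic; the paper proves it by lifting $G$ to $\mathbb C^3\setminus\{0\}$ and patching local sections of the projection), and (ii) a lower bound for $\int_{W^u_\vp(\hat y)}T$ that is uniform in $\hat y\in\hat\Lambda$; positivity on a single leaf comes from non-normality of the iterates on an unstable disk (zero slice mass forces the Green potential to be harmonic on the disk, hence normality), and uniformity needs a compactness/continuity argument over $\hat\Lambda$, none of which appears in your text. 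Note also that the paper's mechanism only needs nontriviality of $\nu_i$ (from \cite{DJ}), not a uniform leafwise lower bound, which is a strictly stronger statement; this is one concrete advantage of the holonomy/Bowen--Marcus route.

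Part (b) contains a genuine gap. You reduce to showing $\sigma=\sigma^u$ and appeal to ``a uniqueness statement for such equivariant currents,'' but no such theorem is cited or proved, and for positive closed $(1,1)$-currents supported on $\ovl{W^u(\hat\Lambda)}$ with $f_\star S=d\,S$ uniqueness is not an off-the-shelf fact -- it is precisely what would have to be established. Moreover the equivariance $f_\star\sigma^u=d\,\sigma^u$ does not follow from holonomy invariance of the family $\hat\mu^s_\cdot$, as you assert; it comes from the scaling property $\hat f_\star\hat\mu^s_x=e^{h_{top}(f|_\Lambda)}\hat\mu^s_{f(x)}$ together with $h_{top}(f|_\Lambda)=\log d$ for terminal sets \cite{DJ}. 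The paper avoids all of this by exploiting the laminar structure of $\sigma$ from \cite{FS}, $\sigma=\int [W^u_r(\hat y)]\,d\lambda(g_{\hat y})$, defining stable transversal measures $\hat\nu^s_x$ directly from $\lambda$, checking their invariance under unstable holonomy, and applying Bowen--Marcus a second time, then combining with part (a). If you insist on a current-theoretic argument, a workable substitute is to apply the Fornaess--Sibony convergence $f^n_\star[D]/d^n\to(\int[D]\wedge T)\,\sigma$ leafwise to the disks constituting $\sigma^u$, together with $f_\star\sigma^u=d\sigma^u$ and $\int\sigma^u\wedge T=1$, to get $\sigma^u=\sigma$; but the exchange of the limit with the integral over the transversal measure then needs justification, and your proposal does not address it.
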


\begin{proof}

a) First let us remind the properties of the transversal measures $\hat \mu^s_x$; they are built in the same fashion as in \cite{RS} (see also \cite{S}), but on the natural extension $\hat \Lambda$. The key to that proof is the existence for diffeomorphisms of a Markov partition; in our endomorphism case, we have instead a Markov partition on the inverse limit $\hat \Lambda$ (see \cite{Ru-carte78}). 
Moreover the inverse limit $\hat \Lambda$ has local product structure, in fact it is a Smale space (see \cite{Ru-carte78}, \cite{KH}). 
One obtains then a system of transversal measures $\hat \mu^s_x$ on $\hat W^s_{loc}(x)$, where we denote  by $\hat W^s_{loc}(x)$ and $\hat W^u_{loc}(\hat x)$ the lifts to $\hat \Lambda$ of the local stable intersection $W^s_{loc}(x) \cap \Lambda$, respectively of the local unstable intersection $W^u_{loc}(\hat x) \cap \Lambda$. More precisely $\hat W^s_{loc}(x) := \pi^{-1}(W^s_{loc}(x) \cap \Lambda)$ and $\hat W^u_{loc}(\hat x):= \pi^{-1}(W^u_{loc}(\hat x) \cap \Lambda), \hat x \in \hat \Lambda$. 
Let us assume without loss of generality that all the stable and unstable local manifolds we work with, are of size $r$ for some $r>0$ small enough.  
The measures $\hat \mu^s_x$ satisfy the following properties: 

i) if $\chi^s_{x, y}: \hat W^s_r(x) \to \hat W^s_r(y)$ is the holonomy map given by $\chi^s_{x, y}(\hat \xi) = \hat W^u_r(\hat \xi) \cap \hat W^s_r(y)$, then $\hat \mu^s_x(A) = \hat \mu^s_y(\chi^s_{x, y}(A))$ for any borelian set $A$.

ii) $\hat f_\star \hat \mu^s_x = e^{h_{top}(f|_\Lambda)} \hat \mu^s_{f(x)}|_{\hat f(\hat W^s_r(x))}$

iii) \ $ \text{supp} \ \hat \mu^s_x = \hat W^s(x)$.

In fact from \cite{RS} and \cite{S} applied to our case on $\hat \Lambda$, it follows that there exist also unstable transversal measures, denoted by $\hat\mu^u_{\hat x}$ on $\hat W^u_r(\hat x), \hat x \in \hat \Lambda$ with similar properties. 
And moreover the measure of maximal entropy on $\hat \Lambda$ denoted by $\hat \mu_0$, can be written as the product of transversal stable measures $\hat \mu^s_y$ with transversal unstable measures $\hat \mu_{\hat x}$ i.e
\begin{equation}\label{product}
\hat \mu_0(\phi) = \int_{\hat W^s_r(x)} (\int_{\hat W^u_r(\hat y)} \phi \ d\hat \mu^u_{\hat y})  \ d\hat \mu^s_x(\hat y),
\end{equation}
for any function $\phi$ defined on a neighbourhood of $\hat x \in \hat \Lambda$.  

Now in \cite{DJ} the measure $\nu_i$ is defined as the wedge product $\sigma^u \wedge T$, where the positive closed current $\sigma^u$ is constructed with the help of the stable transversal measures $\hat \mu^s_x, x \in \Lambda$. Recall also that $\pi|_{\hat W^u_r(\hat x)}: \hat W^u_r(\hat x) \to W^u_r(\hat x)$ is a bijection (see \cite{M-DCDS06}), so any function $\phi$ on $\hat W^u_r(\hat x)$ determines uniquely a function denoted again with $\phi$ on $W^u_r(\hat x)$. 
Then from the measure $\nu_i$ we can form a system of measures on the lifts of local unstable manifolds $\hat W^u_r(\hat x), \hat x \in \hat \Lambda$ in the following way:
$$
\hat \nu^u_{\hat x}(\phi) = \int_{W^u_r(\hat x)} \phi T|_{W^u_r(\hat x)}
$$
We assumed that $f$ is mixing on $\Lambda$; in fact (topological) mixing of $f$ on $\Lambda$ is equivalent to mixing of $\hat f$ on $\hat \Lambda$. Define stable holonomy maps between lifts to $\hat \Lambda$ of local unstable manifolds, namely $\chi^u_{\hat x, \hat y}: \hat W^u_r(\hat x) \to \hat W^u_r(\hat y), \chi^u_{\hat x, \hat y}(\hat \xi) = \hat W^u_r(\hat y) \cap \hat W^s_r(\hat \xi), \ \hat \xi \in \hat W^u_r(\hat x)$. 

We wish to prove that the measures $\hat \nu^u_{\hat x}$ are transversal and invariant with respect to stable holonomy maps in the Smale space structure of $\hat \Lambda$, in the sense of Bowen and Marcus (\cite{BM}).
 From the way the local unstable manifolds were constructed as determined by prehistories, it follows that there is a bijection between $W^u_r(\hat x)\cap \Lambda$ and its lift $\hat W^u_r(\hat x)$ (see also \cite{M-DCDS06}). Given a borelian set $\hat A \subset \hat W^u_r(\hat x)$, there exists a unique borelian set $A \subset W^u_r(\hat x) \cap \Lambda$ such that $\pi$ is a bijection between $\hat A$ and $A$. From the definition of $\hat\nu^u_{\hat x}$, we know that $\hat \nu^u_{\hat x}(\hat A) = \int_{A \cap W^u_r(\hat x) \cap \Lambda} dd^c G|_{W^u_r(\hat x)}$.   

Let us remind some geometric properties of the positive closed current $T$ used in the definition of $\hat \nu^u_{\hat x}$ (see \cite{FS-survey}, for more details). First there exists a continuous plurisubharmonic function $G$ on $\mathbb C^3\setminus \{0\}$ called the Green function of $f$,  satisfying $G(F(z)) = d \cdot G(z)$ where $F: \mathbb C^3 \setminus \{0\} \to \mathbb C^3 \setminus \{0\}$ is the 
lift of $f$ relative to the canonical projection $\pi_2 : \mathbb C^3 \to \mathbb P^2$. We have $G \in \mathcal{P}_1$, where $\mathcal{P}_1$ is the cone of plurisubharmonic functions $u$ on $\mathbb C^3\setminus \{0\}$ satisfying the homogeneity condition $u(\lambda z) = \log |\lambda| + u(z), \  \lambda \in \mathbb C \ \text{and} \ z \in \mathbb C^3 \setminus \{0\}$.
Recall that $\pi_2^* T = dd^c G$. \

Denote now the unstable intersection $W^u_r(\hat x) \cap \Lambda$  by $Z(\hat x)$ for $\hat x \in \hat \Lambda$. Consider points $x, y$ in a subset of $\Lambda$ belonging to an open set $V \in \mathbb P^2$ so there exists a holomorphic inverse $s: V \to \mathbb C^3 \setminus \{0\}$ of $\pi_2$. Then for $r$ small we can identify $Z(\hat x), Z(\hat y)$ with their respective lifts to $\mathbb C^3 \setminus \{0\}$ for any prehistories $\hat x, \hat y \in \hat \Lambda$. Since there are no critical points of $f$ in $\Lambda$ and since we work on $\Lambda$, it follows that $Z(\hat x)$ can be split into mutually disjoint subsets on which $f^n$ is injective, i.e $Z(\hat x) = \mathop{\cup}\limits_i  Z_{i, n}(\hat x)$, \  $f^n|_{Z_{i, n}(\hat x)}: Z_{i, n}(\hat x) \to Z_i^n(\hat x)$ is bijective, and moreover $Z^n_i(\hat x), i$ are mutually disjoint. It follows that $f^n(Z(\hat x)) = \cup_i Z_i^n(\hat x)$.   Now if $Z(\hat x)$ is contained in $V$, then $f^n(Z(\hat x))$ may not be contained in $V$; but, if $f^n(Z(\hat x))$ is contained say in $V_1 \cup V_2$ where $V_1, V_2$ are open sets in $\mathbb P^2$ as above, with respective local inverses $s_1, s_2$ of $\pi_2$, and if $V_1 \cap V_2 \ne \emptyset$, then there exists a holomorphic function $\rho$ on $V_1 \cap V_2$ so that $s_1 = \rho s_2$ on $V_1 \cap V_2$.  So $dd^c (G \circ s_1) = dd^c(G(\rho s_2)) = dd^c \log |\rho| + dd^c (G \circ s_2) = dd^c(G \circ s_2)$; this implies that working with $dd^c G$ on $\mathbb C^3\setminus \{0\}$ is the same as working on $\mathbb P^2$.
Now $G \circ F = d \cdot G$ and $f^n: Z_{i, n}(\hat x) \to Z^n_i(\hat x)$ is bijective hence similarly as in \cite{FS}, $\int_{Z_{i}^n(\hat x)} dd^c G = d^n \int_{Z_{i, n}(\hat x)} dd^cG$. Thus by adding over all the indices $i$ we obtain: 
\begin{equation}\label{G}
\int_{f^n(Z(\hat x))} dd^c G = d^n \int_{Z(\hat x)} dd^c G
\end{equation}
Now let $x, y \in \Lambda$ closer than $r/2$ and iterate $Z(\hat x) $ and $Z(\hat y)$ for some prehistories $\hat x, \hat y \in \hat \Lambda$. We also take as above, the subsets $Z_{i, n}(\hat y)$ such that $f^n: Z_{i, n}(\hat y) \to Z^n_i(\hat y)$ is a bijection,  $Z^n_i(\hat y), i$
are mutually disjoint and $f^n(Z(\hat y) = \cup Z^n_i(\hat y)$. If $Z_{i, n}(\hat x), Z_{i, n}(\hat y)$ has diameter small enough, then it follows that $Z^n_i(\hat x), Z^n_i(\hat y)$ both have diameter bounded above by $r$ and they are very close to each other, in fact $d(Z^n_i(\hat x), Z^n_i(\hat y)) \to 0$ for each $i$, when $n \to \infty$. This follows as in the Laminated Distortion Lemma (see \cite{M-Cam}) since the distances between iterates of points on stable manifolds decrease exponentially, and the unstable derivative $|Df_u|$ is H\"older continuous. Now if $\psi$ is a smooth test function equal to 1 on a fixed neighbourhood of $Z^n_i$ we have $\int_{Z^n_i(\hat x)}  dd^c G = \int_{Z^n_i(\hat x)} \psi dd^c G = \int_{Z^n_i(\hat x)} G dd^c \psi$ hence since $dd^c \psi $ is continuous and $Z^n_i(\hat x)$ and $Z^n_i(\hat y)$ are close, we obtain $|\int_{f^n(A) \cap Z^n_i(\hat x)} dd^cG - \int_{f^n(\chi^u_{\hat x, \hat y}(A)) \cap Z^n_i(\hat y)} dd^cG| \le \vp  m_2(Z^n_i(\hat x))$ for $n$ large enough, where $m_2$ is the Lebesgue measure on $\mathbb P^2$.  
Now we add these inequalities over $i$ and use the fact proved in Proposition 5.3 of \cite{FS} that $m_2(f^n(Z(\hat x))) \le C d^n, n >0$. Hence by dividing with $d^n$, using (\ref{G}), and letting $n \to \infty$ we obtain 
$\int_{A \cap Z(\hat x)} dd^c G = \int_{\chi^u_{\hat x, \hat y}(A) \cap Z(\hat y)} dd^c G$. We lift then to the natural extension, keeping in mind that there exists a homeomorphism between $Z(\hat x)$ and $\hat W^u_r(\hat x)$. Hence on $\hat \Lambda$ we have:
$$
\hat \nu^u_{\hat x}(\hat A) = \hat \nu_{\hat y}(\chi^u_{\hat x, \hat y}(\hat A)), \  \hat A \ \text{borelian set in} \ \hat W^u_r(\hat x)
$$ 
The above equality can be extended next to general borelian sets contained in global unstable sets $\hat W^u(\hat x) = \mathop{\cup}\limits_{n \ge 0} \hat f^n(\hat  W^u_r(\hat x)), \hat x \in \hat \Lambda$. \
Thus by a theorem of Bowen and Marcus (\cite{BM}) extended to the mixing homeomorphism $\hat f$ on $\hat \Lambda$, it follows that there exists a positive constant $\gamma$ such that $\hat \nu^u_{\hat x} = \gamma \hat \mu^u_{0, \hat x}$,  for any $\hat x \in \hat \Lambda$,  where $\hat \mu^u_{0, \hat x}$ are the transversal measures given by the measure of maximal entropy $\hat \mu_0$ on $\hat \Lambda$ (as in  \cite{RS}, \cite{S}); see also (\ref{product}). 
In fact if $\mu_0$ is the unique measure of maximal entropy on $\Lambda$ and if $\hat \mu_0$ is the unique measure of maximal entropy on $\hat \Lambda$, then $$\mu_0 = \pi_* \hat \mu_0 \  \text{and} \  h_{\mu_0} = h_{top}(f|_\Lambda) = h_{top}(\hat f|_{\hat \Lambda}) = h_{\hat \mu_0}$$
The measure $\nu_i$ is constructed with the transversal stable measures $\hat \mu^s_x$ (which we denote also by $\hat \mu^s_{0, x}$). 
Now from \cite{Ru-carte78} we know that any $f$-invariant measure $\mu$ on $\Lambda$ can be lifted uniquely to an $\hat f$-invariant measure $\hat \mu$ on $\hat \Lambda$ such that $\pi_* \hat \mu = \mu$. In our case we denote by $\hat \nu_i$ this unique lift of $\nu_i$ to $\hat \Lambda$. 
Since both $\hat \nu_i$ and $\hat \mu_0$ are ergodic probabilities on $\hat \Lambda$, it follows that $\gamma = 1$ and that $$\hat \mu_0 = \hat \nu_i, \ \text{hence} \ \mu_0 = \nu_i$$

\ \ b) Let us assume now that $f$ has Axiom A, that $\Lambda$ is a minimal basic set (i.e the unstable set of $\Lambda$ does not intersect the stable set of any other basic set $\Lambda'$) and that $f$ is c-hyperbolic on $\Lambda$.
Then Fornaess and Sibony (\cite{FS}) constructed a positive closed $(1, 1)$ current $\sigma$ supported on the global unstable set $W^u(\hat \Lambda)$ such that if $D$ is a local disk transverse to the stable direction, then $\frac{f^n_* ([D])}{d^n} \to (\int [D]\wedge T) \sigma$. Without loss of generality assume that the disk $D$ is chosen such that $\int [D] \wedge T = 1$; and also that $T$ has no mass on the boundary $\partial D$ of $D$. 

We have from \cite{FS} that locally on a neighbourhood $\Delta$ of a point $x \in \Lambda$ there exists a measure $\lambda$ defined on the space of holomorphic maps from a local unstable disk $\Delta_1$ to a local stable disk $\Delta_2$ such that $\sigma = \int [W^u_r(\hat y)] d\lambda(g_{\hat y})$ where $W^u_r(\hat y)$ are local unstable manifolds intersecting $\Delta$ and $g_{\hat y}:\Delta_1 \to \Delta_2$ is a holomorphic map whose graph is $W^u_r(\hat y)$. 

Then $\nu = \sigma \wedge T$ is supported only on $\Lambda$; hence we can define measures $\hat \nu^s_x$ on $\hat W^s_r(x)$ by $\hat \nu^s_x(\hat A) = \lambda(\{g_{\hat y}, \ \hat y \in \hat A\}$.

Thus from the way the function $g_{\hat y}$ was defined, namely as a function whose graph is $W^u_r(\hat y)$, it follows that these measures are invariant to the local holonomy map between $\hat W^s_r(x)$ and $\hat W^s_r(y)$ for $x, y$ close. Also by covering with small flow boxes it follows we can extend this property globally. Therefore from \cite{BM} we obtain that 
$\hat \nu^s_x = \gamma \hat \mu^s_{0, x},$
 where the constant $\gamma>0$ does not depend on $x \in \Lambda$. Now $\nu$ was defined as integration of $T$ on local unstable manifolds followed by integration with respect to transversal measures; by using a) we obtain that 
$\hat \nu = \hat \mu_0, \ \text{and thus} \ \nu = \mu_0$.
Hence on minimal saddle basic sets the measure $\nu$ is equal to the measure $\nu_i$ and both are equal to the measure of maximal entropy $\mu_0$ on $\Lambda$. 

\end{proof}

Now that we know that the measure $\nu_i$ is equal to the measure of maximal entropy $\mu_0$, and to $\nu$ when $f$ has Axiom A and $\Lambda$ is minimal, we find its pointwise dimension. 

\begin{cor}\label{hol}
a) Let $\Lambda$ be a mixing terminal saddle set for a holomorphic map $f : \mathbb P^2 \to \mathbb P^2$ of degree $d$, s.t $\Lambda$ does  not intersect the critical set $C_f$ of $f$.
If each point in $\Lambda$ has at most $d'$ $f$-preimages in $\Lambda$ and if $d'< d$, then for $\mu_\phi$-a.e $z$, $$\underline{\delta}_{\nu_i}(z) \ge \log d \cdot (\frac{1}{\chi_u(\nu_i)} - \frac{1}{\chi_s(\nu_i)}) + \log d' \cdot \frac{1}{\chi_s(\nu_i)}$$

b) If $\Lambda$ is a mixing terminal saddle set for a holomorphic map $f$ on $\mathbb P^2$ of degree $d$, if $C_f \cap \Lambda = \emptyset$  and if the preimage counting function is constant equal to $d'$ on $\Lambda$ for $d'\le d$,  then we have:
$$
\delta_{\nu_i} = HD(\nu_i) = \log d \cdot (\frac{1}{\int \log |Df_u| d\nu_i} - \frac{1}{\int \log |Df_s| d\nu_i}) + \log d'\cdot \frac{1}{\int\log |Df_s| d\nu_i},$$

c)  Let $f: \mathbb P^2 \to \mathbb P^2$ be a holomorphic Axiom A map of degree $d$, which is c-hyperbolic on a connected minimal saddle set $\Lambda$. Then 
$$\delta_{\nu} = HD(\nu) = \log d \cdot (\frac{1}{\chi_u(\nu)} - \frac{1}{\chi_s(\nu)}) + \log d'\cdot \frac{1}{\chi_s(\nu)},$$
where $d'$ denotes the constant number of $f$-preimages in $\Lambda$ of a point.
\end{cor}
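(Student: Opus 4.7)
The plan for all three parts is to reduce to the machinery already established: Theorem \ref{ptw} and Corollary \ref{ineq} applied to the zero potential $\phi \equiv 0$, combined with the identifications $\nu_i = \mu_0$ in part (a) and $\nu = \nu_i = \mu_0$ in part (c) provided by Theorem \ref{t1}. Here $\mu_0$ is the measure of maximal entropy of $f|_\Lambda$, which is precisely the equilibrium measure of $\phi \equiv 0$.

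The one auxiliary fact needed in all three parts is
$$P(0) = h_{\mu_0} = h_{top}(f|_\Lambda) = \log d.$$
I would extract this from the proof of Theorem \ref{t1}: the scaling $f^* T = d T$, the transformation law $\hat f_* \hat \mu^s_x = e^{h_{top}(f|_\Lambda)} \hat \mu^s_{f(x)}|_{\hat f(\hat W^s_r(x))}$, and the fact that $\nu_i = \sigma^u \wedge T$ is an $f$-invariant \emph{probability} on $\Lambda$ together force $h_{top}(f|_\Lambda) = \log d$. With this identity, the inequality $\phi + \log d' < P(\phi)$ from Theorem \ref{ptw}/Corollary \ref{ineq} becomes exactly $d' < d$, which is the hypothesis in (a) (and is the effective content of $d'\le d$ in (b), since $d'=d$ would contradict $\Lambda$ being a proper saddle basic set). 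For c-hyperbolicity: in (a) we have $C_f\cap\Lambda=\emptyset$ and $f$ hyperbolic on the basic set $\Lambda$, and the local isolation property $f^{-1}(\Lambda)\cap U=\Lambda$ (valid for basic sets with no critical points) completes c-hyperbolicity; in (c) it is part of the hypothesis.

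For (a), Corollary \ref{ineq} applied to $\phi \equiv 0$ directly yields
$$\underline{\delta}_{\nu_i}(z) \ge h_{\mu_0}\left(\frac{1}{\chi_u(\nu_i)} - \frac{1}{\chi_s(\nu_i)}\right) + \log d' \cdot \frac{1}{\chi_s(\nu_i)},$$
and substituting $h_{\mu_0} = \log d$ produces the claimed inequality. For (b), the constancy of the preimage counting function brings us into the setting of Theorem \ref{ptw} itself; the same substitution $\phi\equiv 0$, $h_{\mu_0}=\log d$ yields exact dimensionality of $\nu_i$ with $\delta_{\nu_i} = HD(\nu_i)$ equal to the stated formula, the Lyapunov exponents $\chi_s(\nu_i)$, $\chi_u(\nu_i)$ being written as the integrals $\int \log|Df_s|\,d\nu_i$, $\int\log|Df_u|\,d\nu_i$. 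For (c), Theorem \ref{t1}(b) gives $\nu = \nu_i = \mu_0$; since $\Lambda$ is connected and $f$ is c-hyperbolic on $\Lambda$, the remark just after Definition \ref{counting} ensures that the preimage counting function is constant equal to some $d'$ on $\Lambda$, so part (b) applies verbatim to $\nu$ and yields the claimed formula.

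The step I expect to be the main obstacle is the clean justification that $h_{top}(f|_\Lambda) = \log d$ for mixing terminal saddle sets: without this identity, the factor $\log d$ in the statements would be replaced by an a priori unknown entropy, and all three formulas would lose their explicit form. Everything else is bookkeeping, consisting of specializing Theorem \ref{ptw}/Corollary \ref{ineq} to $\phi \equiv 0$ and invoking the identifications from Theorem \ref{t1}.
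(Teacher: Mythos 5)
Your overall strategy (specialize Theorem \ref{ptw} and Corollary \ref{ineq} to $\phi\equiv 0$, identify $\nu_i$, resp. $\nu$, with the measure of maximal entropy $\mu_0$ via Theorem \ref{t1}, and use $h_{top}(f|_\Lambda)=\log d$ for terminal sets) is exactly the paper's route for parts a) and c) and for the case $d'<d$ of part b). However, there is a genuine gap: you dismiss the case $d'=d$ in part b) by asserting that it ``would contradict $\Lambda$ being a proper saddle basic set.'' That assertion is false, and the case is not vacuous: the paper's Example 1 ($f_0(z,w)=(z^2+c,w^2)$ on $\Lambda_0=\{p_0(c)\}\times S^1$, which is $2$-to-$1$, with $d=2$) and case 2) of Corollary \ref{grad2} show that a saddle basic set can have $d'=d$. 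In that case the hypothesis $\phi+\log d'<P(\phi)$ of Theorem \ref{ptw} fails for $\phi\equiv 0$ (it would read $\log d<\log d$), so your reduction breaks down precisely there. The paper handles $d'=d$ by a separate argument: since every point has $d$ preimages in $\Lambda$ and $h_{top}(f|_\Lambda)=\log d$, the zero of $t\mapsto P(t\log|Df_s|-\log d)$ is $0$, so by \cite{M-JSP11} the restriction $f|_\Lambda$ is expanding; then Bowen balls are themselves round balls of measure comparable to $d^{-n}$ and radius comparable to $\vp|Df_u^n(z)|^{-1}$, giving $\delta_{\nu_i}=HD(\nu_i)=\log d/\chi_u$, which coincides with the stated formula when $d'=d$. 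The same issue propagates to your treatment of part c), where $d'=d$ can also occur and must be covered by this expanding-case argument rather than by ``part (b) verbatim'' in your restricted reading of it.

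Two secondary points. First, the identity $h_{top}(f|_\Lambda)=\log d$ for terminal saddle sets is not something the paper re-derives from the transversal-measure transformation law; it is quoted from Diller--Jonsson \cite{DJ}. Your sketch (invariance of $\nu_i=\sigma^u\wedge T$ plus $f^*T=dT$ forcing the entropy) is not a complete argument as stated, since property ii) already has $e^{h_{top}(f|_\Lambda)}$ built into it and cannot by itself determine that exponent; citing \cite{DJ} is the correct move. Second, your claim that the isolation property $f^{-1}(\Lambda)\cap U=\Lambda$ is automatic ``for basic sets with no critical points'' is unjustified (in Example 1 the paper needs injectivity of $f_\vp|_{\Lambda_\vp}$ to prove it); the paper's proof of part a) simply applies Corollary \ref{ineq} using $C_f\cap\Lambda=\emptyset$ and the bound $d(x)\le d'$, without invoking that property, so you should not manufacture it.
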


\begin{proof}
 \ a) We use the result of \cite{DJ} that the topological entropy of $f|_\Lambda$ is equal to $\log d$ if $\Lambda$ is a terminal saddle basic set.  
From Theorem \ref{t1} we have that  $\nu_i = \mu_0$, the measure of maximal entropy on $\Lambda$; hence $h_{\mu_0} = h_{top}(f|_\Lambda)$. Then the inequality follows from Corollary \ref{ineq} in case the number of preimages in $\Lambda$ is bounded above by $d'$.

\ b) If $\Lambda$ is terminal, then from Theorem \ref{t1} we know that $\nu_i$ is equal to $\mu_0$ the measure of maximal entropy of $f|_\Lambda$; also since the topological entropy of $f|_\Lambda$ is $\log d$, it follows that $h_{\nu_i} = \log d$. 

If the preimage counting function is constant and equal to $d'$, then in case $d' < d$, we have that condition $\phi + \log d'< P(\phi)$ is satisfied on $\Lambda$ for $\phi \equiv 0$. So one can apply Theorem \ref{ptw} in order to obtain the pointwise dimension of $\nu_i$; and since $\delta_{\nu_i}$ is constant, then $HD(\nu_i) = \delta_{\nu_i}$.  

There remains only the case $d'= d$. In this case every point in $\Lambda$ has $d$ $f$-preimages in $\Lambda$ and $h_{top}(f|_\Lambda) = \log d$ (from \cite{DJ}). \ 
Thus the unique zero of the function $t \to P(t\log |Df_s| - \log d)$ is equal to 0; so from \cite{M-JSP11} the function $f|_\Lambda$ is expanding. In this expanding case we have that $B_n(z, \vp)$ is itself  a round ball of $\mu_0$-measure comparable to $\frac{1}{d^n}$ (from the estimates of equilibrium measures on Bowen balls in (\ref{bow})). At the same time the radius of this ball is comparable to $\vp|Df_u^n(z)|^{-1}$. Hence  the lower and upper pointwise dimensions of $\nu_i$ coincide and the pointwise dimension and Hausdorff dimension of $\nu_i$ are both equal to $$\delta_{\mu_0} = HD(\mu_0) = \frac{\log d}{\chi_u(\mu_0)}$$

\ c) In this case if $\Lambda$ is connected and $f$ is c-hyperbolic on $\Lambda$, it follows from \cite{M-Cam} that the preimage counting function is constant on $\Lambda$. Also $\Lambda$ cannot be written as a disjoint union of compact sets, so it is mixing for an iterate of $f$. So we can apply Theorem \ref{ptw} for the measure $\nu$ which, according to Theorem \ref{t1} is equal to $\mu_0$, and thus has entropy $\log d$. 

\end{proof}

\textbf{Remark.}
If $f$ is a smooth endomorphism hyperbolic on a basic set $\Lambda$, then by taking a smooth perturbation $g$ of $f$, it follows that $g$ has also a basic set $\Lambda_g$ on which it is hyperbolic (see \cite{Ru-carte89}). Also if $\Lambda$ is \textbf{connected} then $\hat \Lambda$ is connected, so from the conjugacy of $\hat f|_{\hat \Lambda}$  to $\hat g|_{\hat \Lambda_g}$,  $\hat \Lambda_g$ is connected and thus $\Lambda_g$ is connected too. However the dynamics of perturbations may be very \textbf{different}; for instance perturbations of toral endomorphisms are not conjugated necessarily to the original maps, and may even have infinitely many unstable manifolds through a given point. 

$\hfill\square$

For minimal c-hyperbolic sets of maps of degree 2 we can determine the possible values of the pointwise dimension of $\nu$;  recall that the preimage counting function is constant if $\Lambda$ is connected.

\begin{cor}\label{grad2}
Let $f$ be an Axiom A holomorphic map on $\mathbb P^2$ of degree $2$, which is c-hyperbolic on a connected minimal saddle set $\Lambda$. Then we have exactly one of the following two possibilities:

1) the preimage counting function of $f$ is equal to $1$ on $\Lambda$; then $f|_\Lambda$ is a homeomorphism and $$\delta_{\nu} = \log 2 \cdot (\frac{1}{\int \log|Df_u| d\nu} -  \frac{1}{\int\log |Df_s|d\nu})$$

2) or, the preimage counting function of $f$ is equal to 2 on $\Lambda$; then $f|_\Lambda$ is expanding and $$\delta_{\nu} = \log 2 \cdot \frac{1}{\int \log |Df_u| d\nu}$$
\end{cor}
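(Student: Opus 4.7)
My plan is to pin down the possible values of the preimage counting function on $\Lambda$, showing it must be $1$ or $2$, and then invoke the already established Theorem \ref{t1} and Corollary \ref{hol} in each case. To set up: since $\Lambda$ is connected and $f$ is c-hyperbolic on $\Lambda$, the preimage counting function is constant on $\Lambda$ by \cite{M-Cam}; denote its value by $d'$. Since $f$ is Axiom A and $\Lambda$ is minimal, $\Lambda$ is terminal, and by \cite{DJ} one has $h_{top}(f|_\Lambda) = \log d = \log 2$; in particular, by Theorem \ref{t1} b), $\nu = \mu_0$ and so $h_\nu = \log 2$.

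Next I would rule out $d' \ge 3$ by an entropy argument. Pick any $x \in \Lambda$; since $C_f \cap \Lambda = \emptyset$, the $(d')^n$ preimages of $x$ in $\Lambda$ under $f^n$ are pairwise distinct, and using the expansiveness that accompanies hyperbolicity of $f|_\Lambda$ on the compact basic set, one checks that for some fixed $\vp_0>0$ they form an $(n, \vp_0)$-separated set. This forces $h_{top}(f|_\Lambda) \ge \log d'$, hence $d' \le 2$, so only $d' = 1$ and $d' = 2$ need to be treated.

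For $d' = 1$, the map $f|_\Lambda$ is a continuous bijection of the compact set $\Lambda$, hence a homeomorphism; applying Corollary \ref{hol} c) with $d' = 1$ collapses the $\log d'/\chi_s$ term to zero and, combined with $h_\nu = \log 2$, yields the first stated formula. For $d' = 2$, the hypothesis $\phi + \log d' < P(\phi)$ of Theorem \ref{ptw} fails for $\phi \equiv 0$ (since $P(0) = \log 2 = \log d'$), so I would follow the end of the proof of Corollary \ref{hol} b): the function $t \mapsto P(t\log|Df_s| - \log d)$ has $t = 0$ as its unique zero, because $P(-\log 2) = h_{top}(f|_\Lambda) - \log 2 = 0$, and so by \cite{M-JSP11} the map $f|_\Lambda$ is expanding. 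In this regime the Bowen ball $B_n(z,\vp)$ is itself a round ball of radius comparable to $\vp|Df_u^n(z)|^{-1}$ and $\nu$-measure comparable to $1/d^n = 1/2^n$ by (\ref{bow}); taking logarithms, dividing by $\log$ of the radius, and applying the Birkhoff ergodic theorem to $\log|Df_u|$ gives $\delta_\nu = \log 2 / \int \log|Df_u|\,d\nu$ $\nu$-a.e. The main obstacle I anticipate is the entropy lower bound $h_{top}(f|_\Lambda) \ge \log d'$ in the second paragraph: the $(n,\vp_0)$-separation of the $(d')^n$ preimages must be uniform in $n$, which rests on the expansiveness built into hyperbolicity and on the absence of critical points in $\Lambda$; once this is in place, both cases reduce to the previously established results.
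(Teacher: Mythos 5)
Your proposal is correct and follows essentially the same route as the paper: constancy of the preimage counting function on the connected set, $h_{top}(f|_\Lambda)=\log 2$ via terminality and \cite{DJ}, exclusion of $d'\ge 3$ by an entropy bound, Theorem \ref{t1} plus Theorem \ref{ptw} (through Corollary \ref{hol}) when $d'=1$, and the zero of $t\mapsto P(t\log|Df_s|-\log 2)$ with \cite{M-JSP11} and the Bowen-ball estimate (\ref{bow}) when $d'=2$. The only deviation is cosmetic: you prove $h_{top}(f|_\Lambda)\ge \log d'$ by hand with an $(n,\vp_0)$-separated set of the $(d')^n$ preimages (valid, since $C_f\cap\Lambda=\emptyset$ gives a uniform separation of distinct preimages), where the paper simply invokes the Misiurewicz--Przytycki theorem.
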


\begin{proof}
If $f$ has Axiom A and $\Lambda$ is minimal then $\Lambda$ is terminal, thus from \cite{DJ} it follows that $h_{top}(f|_\Lambda) = \log 2$. 
Now if $\Lambda$ is connected and if there exists a neighbourhood $U$ of $\Lambda$ with $f^{-1}(\Lambda) \cap U = \Lambda$ then the preimage counting function is constant on $\Lambda$. If the preimage counting function is equal to $d'$ on $\Lambda$ it follows that  $d' \le 2$, otherwise from Misiurewicz-Przytycki Theorem (see \cite{KH}) we would have $h_{top}(f|_\Lambda) \ge \log d' > \log 2$ which is impossible, as we saw above.

So we either have $d' =1$ or $d'= 2$. In the first case we can apply Theorem \ref{ptw} for the potential $\phi \equiv 0$ since $\log d'< P(0) = \log d$ on $\Lambda$. In this case  $f|_\Lambda$ is a homeomorphism (like for instance the family of polynomial perturbations constructed in \cite{MU-HJM}). 

In the second case if $d'= 2$,  the stable dimension $\delta^s := HD(W^s_r(z) \cap \Lambda)$ is equal to the unique zero of the pressure function $t \to P(t\log |Df_s| - \log 2)$ (see \cite{M-Cam} and references therein); but since $h_{top}(f|_\Lambda) = \log 2$, the zero of this pressure function is indeed equal to 0. Then $\delta^s = 0$ and we can apply the result of \cite{M-JSP11} saying that in this case, $f$ must be expanding on $\Lambda$. 

From Theorem \ref{t1} the measure $\nu$  is equal to $\mu_0$, i.e the measure of maximal entropy. Therefore from Theorem \ref{ptw} the pointwise dimension of $\nu$ is $\delta_{\nu} = \log 2 \cdot \frac{1}{\int \log |Df_u| d\nu}$.

\end{proof}

\textbf{Remark and examples.}

 \ 1) \ First notice that the map $f_0(z, w) = (z^2+c, w^2)$ is 2-to-1 and expanding on $\Lambda_0= \{p_0(c)\} \times S^1$, where $p_0(c)$ is the fixed attracting point of $z \to z^2 +c$ for $|c|$ is small enough. \
We gave in \cite{MU-HJM} a class of examples of \textbf{perturbations} of $f_0$ by polynomial maps which are \textbf{homeomorphic on their respective basic sets}. This shows in particular that the preimage counting function is \textbf{not} necessarily preserved by perturbations. These are maps of type $$f_\vp(z, w) = (z^2 + c+ a\vp z + b\vp w + d\vp zw + e \vp w^2, w^2),$$ for $b \ne 0$, $|c|$ small and $0 < \vp < \vp(a, b, c, d, e)$. Then $f_\vp$ has a basic set $\Lambda_\vp$ (close to $\Lambda_0$), on which it is hyperbolic and has a homeomorphic restriction. \ 
For $0 < \vp < \vp(a, b, c, d, e)$ it also follows that $f_\vp$ is c-hyperbolic on $\Lambda_\vp$ since there are no critical points of $f_\vp$ in $\Lambda_\vp$ and since there exists a neighbourhood $U$ of $\Lambda_\vp$ such that $f_\vp^{-1}(\Lambda_\vp) \cap U = \Lambda_\vp$. Indeed for $\vp$ fixed if there were no such neighbourhood, then for any neighbourhood $V$ of $\Lambda_\vp$ there would exist a point $y \in V \setminus \Lambda_\vp$ with $f_\vp(y) \in \Lambda_\vp$. Thus for any $n >0$ there would exist points $y_n \in B(\Lambda_\vp, \frac 1n)$ with $f_\vp(y_n) = x_n \in \Lambda_\vp$,  and since $\Lambda$ is compact we can assume $x_n \to x \in \Lambda_\vp$. Since there are no critical points of $f_\vp$ in $\Lambda_\vp$, it follows that there exists a positive distance $\eta_0$ s. t $d(y_n, z_n) > \eta_0$, where $z_n$ is another preimage of $x_n$ belonging to $\Lambda_\vp$ (such $z_n$ must exist since $f_\vp(\Lambda_\vp) = \Lambda_\vp$). Then without loss of generality we can assume that $y_n \to y$ so $y \in \Lambda$ since $d(y_n, \Lambda) < \frac 1n$. But perhaps after passing to a subsequence,  $z_n \to z \in \Lambda$; then from above $d(y, z) > \eta_0/2$. But this is a contradiction since $f_\vp$ is 
homeomorphic on $\Lambda_\vp$. Hence there must exist a neighbourhood $U$ of $\Lambda_\vp$ satisfying \ $$f_\vp^{-1}(\Lambda_\vp) \cap U = \Lambda_\vp$$ 
Also notice that if we fix $a, b, c, d, e, \vp$ as above and perturb now $f_\vp$, we obtain another map $g$ which has a saddle basic set $\Lambda_g$ on which $g$ is hyperbolic and homeomorphic. 

\

\ 2) \ \textbf{Examples} of terminal sets can be obtained by perturbations of known examples $f$ and $\Lambda$; if $W^u(\hat \Lambda) \setminus \Lambda$ is contained in the union of basins of finitely many attracting cycles of $f$, then any small perturbation $g$ has a saddle basic set $\Lambda_g$ close to $\Lambda$, and $W^u(\hat \Lambda_g) \setminus \Lambda_g$ is also contained in the union of basins of attraction of $g$; hence $\Lambda_g$ is terminal too.   

Also the topological entropy of restrictions is preserved by perturbations, i.e $h_{top}(f|_\Lambda) = h_{top}(g|_{\Lambda_g})$. 
Thus by perturbing the known examples (like products of hyperbolic rational maps, or Ueda type examples of \cite{FS}, see \cite{FS}), we obtain more examples of terminal sets. As noticed before if $\Lambda$ is connected, then the basic set $\Lambda_g$ is connected too. And if $f$ is mixing on $\Lambda$ then $\hat f$ is mixing on $\hat \Lambda$; hence from the conjugacy on inverse limits, we obtain that $g$ is mixing on $\Lambda_g$ as well.
 
We can take for instance examples constructed by Ueda's method (see \cite{U}); if $\Phi: \mathbb P^1 \times \mathbb P^1 \to \mathbb P^2$ is the Segre map $\Phi([z_0: z_1], [w_0: w_1]) = [z_0w_0: z_1w_1: z_0w_1 + z_1w_0]$ and $f: \mathbb P^1 \to \mathbb P^1$ is a rational map then there exists $F:\mathbb P^2 \to \mathbb P^2$ holomorphic of the same degree as $f$, so that $\Phi(f, f) = F \circ \Phi$. If $f$ is hyperbolic on its Julia set $J(f)$ (i.e expanding), then $F$ is hyperbolic on basic sets of type \  $$\Lambda = \Phi(\{\text{periodic sink of} \ f\} \times J(f))$$ The saddle set   $\Lambda$ is terminal and topologically mixing for $F$. \
Let us consider now also a holomorphic \textbf{perturbation} $G$ of $F$ with a corresponding basic set $\Lambda_G$, which is close to $\Lambda$. From above it follows that $\Lambda_G$ is terminal and mixing saddle set for $G$. 
Consider also H\"older potentials $\phi$ on $\Lambda_G$  satisfying inequality (\ref{IN}) with respect to $G$; for instance, in the setting of Corollary \ref{hol} we can take $\phi$ sufficiently small in $\mathcal{C}^0$-norm s.t (\ref{IN}) is still satisfied.

Now for each such $\phi$ we have an equilibrium measure $\mu_\phi$ on $\Lambda_G$.
Then it follows that one can apply Theorem \ref{ptw},  Corollary \ref{ineq} and Corollary \ref{hol} in order to obtain the values of  $\mu_\phi$ on iterates of Bowen balls in $\Lambda_G$, and also in order to estimate the (upper/lower) pointwise dimensions of $\mu_\phi$. 

In particular we obtain information about the (upper/lower) pointwise dimensions for the measure $\mu_{0, G}$ of maximal entropy of the restriction $G|_{\Lambda_G}$. 

\

\textbf{Acknowledgements:}

John Erik Fornaess, \ \ \ \ \  johnefo\@@math.ntnu.no

Institutt for Matematiske Fag, 
  Sentralbygg 2, Alfred Getz Vei 1,

  7491 Trondheim,
  Norway

\

 Eugen Mihailescu, \ \ \ \ \  Eugen.Mihailescu\@@imar.ro

Institute of Mathematics of the Romanian Academy,
P. O. Box 1-764,
RO 014700,  

Bucharest, Romania.

\end{document}